\documentclass[12pt,reqno]{amsart}

\usepackage[a4paper,margin=1in]{geometry}
\usepackage{amsmath,amssymb,amsthm,mathtools}
\usepackage{mathrsfs}
\usepackage{enumitem}
\usepackage{hyperref}

\newtheorem{theorem}{Theorem}[section]
\newtheorem{proposition}[theorem]{Proposition}
\newtheorem{lemma}[theorem]{Lemma}
\newtheorem{corollary}[theorem]{Corollary}
\newtheorem{remark}[theorem]{Remark}
\numberwithin{equation}{section}

\newcommand{\T}{\mathbb T}
\newcommand{\R}{\mathbb R}
\newcommand{\C}{\mathbb C}
\newcommand{\Lcal}{\mathcal L}
\newcommand{\ChatS}{\widehat C_{\rm S}}
\newcommand{\one}{\mathbf 1}
\newcommand{\norm}[1]{\left\lVert #1\right\rVert}
\newcommand{\ip}[2]{\left\langle #1,#2\right\rangle}

\title[Optimal Stabilization Prefactors for Schr\"odinger]{Asymptotic Growth of Optimal Stabilization Prefactors for the Schr\"odinger Equations}
\author{Shanlin Huang, Changqin Quan, Gengsheng Wang}
\date{}

\address {Shanlin Huang, School of Mathematics (Zhuhai), Sun Yat-sen University, Zhuhai 519082, Guangdong, China}
\email{huangshlin6@mail.sysu.edu.cn}

\address {Changqin Quan, Graduate School of System Informatics, Kobe University, Kobe, Japan}
\email{quanchqin@gmail.com}

\address {Gengsheng Wang, Hetao Institute of Mathematics and Interdisciplinary Studies (Shenzhen),  518017, China}
\email{wanggs62@yeah.net}

\begin{document}
\maketitle
\begin{abstract}
We study the asymptotic behavior of the optimal stabilization prefactor for
the free Schr\"odinger equation, optimized over all bounded stationary
linear feedbacks. For a prescribed decay rate $\delta>0$, let
$\ChatS(\delta)$ denote this optimal prefactor. Assuming that the uncontrolled
region contains a nonempty open set and that the observability cost satisfies
$C(T)\le C_0e^{C_0/T}$ for small $T>0$, we prove
$$
e^{c\sqrt{\delta}}\le \ChatS(\delta)\le e^{C\sqrt{\delta}},
\qquad \delta\gg1.
$$
The lower bound is based on the uncontrolled open hole and a quantitative
cutoff construction adapted to the Laplacian, while the upper bound combines
an exponentially weighted Gramian with the small-time observability cost
$C(T)\lesssim e^{C/T}$.

The results apply to controlled Schr\"odinger equations in three settings:
flat tori, including a strip-control example beyond the geometric control
condition; the whole space $\mathbb{R}^n$ with exterior-ball control; and
bounded smooth domains satisfying the geometric control condition for
generalized geodesics.

\end{abstract}

\section{Introduction and main result}
\label{sec:introduction}

Let $\Omega$ be one of the following:
\begin{enumerate}[label=\rm(\roman*)]
\item $\Omega=\R^n$;
\item $\Omega=\T^n:=(\R/(2\pi\mathbb Z))^n$;
\item a bounded connected domain $\Omega\subset\R^n$ with smooth boundary.     
\end{enumerate}
We consider the following internally controlled free Schr\"odinger equation over $\Omega$:
\begin{equation}\label{eq:schrodinger}
\begin{cases}
\partial_t y(t,x)
=
i\Delta_\Omega y(t,x)
+\one_\omega(x)u(t,x),
& (t,x)\in(0,\infty)\times\Omega,\\
y(0,x)=y_0(x),
& x\in\Omega.
\end{cases}
\end{equation}
Here $\omega\subset\Omega$ is a nonempty open control region, $\one_\omega$ denotes the characteristic function of $\omega$, 
  $\Delta_\Omega$ is the usual Laplacian in cases {\rm(i)}--{\rm(ii)}
and the Dirichlet Laplacian in case {\rm(iii)}. Thus
\begin{equation}\label{eq:laplacian-domain}
D(\Delta_\Omega)=
\begin{cases}
H^2(\Omega),&\Omega=\R^n\text{ or }\T^n,\\
H^2(\Omega)\cap H_0^1(\Omega),
&\Omega\subset\R^n\text{ is bounded and smooth}.
\end{cases}
\end{equation}

Let 
\begin{equation}\label{eq:free-operators}
X=L^2(\Omega),\qquad A:=i\Delta_\Omega,
\qquad D(A):=D(\Delta_\Omega),
\qquad B:=\one_\omega.
\end{equation}
Then we have
\begin{equation}\label{eq:free-operator-properties}
A^*=-A,
\qquad B^*=B=B^2,
\qquad \norm{B}_{\Lcal(X)}=1.
\end{equation}
Hence $A$ generates the unitary group $U(t):=e^{tA}=e^{it\Delta_\Omega}$,
and
\begin{equation}\label{eq:unitary-properties}
U(t)^*=U(-t),
\qquad \norm{U(t)}_{\Lcal(X)}=1,
\qquad t\in\R.
\end{equation}
We take the complex inner product to be linear in the first variable.

Stabilization of the equation~\eqref{eq:schrodinger} is an important
problem in control theory and gives rise to many significant questions.
Among them, two fundamental ones are: under what conditions a prescribed
exponential decay rate can be achieved, and how a feedback law achieving
such a decay rate can be constructed; see, for instance,
\cite{DusserRabah2000,Vest2013,WangXu2017,MaWangYu2023,
BhandariCapistranoFilhoMajumdarTanaka2024,
FragnelliMoumniSalhi2024,Nguyen2024}
and the references therein.

The problem studied in this paper is of a different nature.
We are concerned with the optimal stabilization prefactor in the
exponential stability estimate as the prescribed decay rate becomes large.
More precisely, for $K\in\Lcal(X)$, define the closed-loop generator
\begin{equation}\label{eq:closed-loop-generator}
A_K:=A+BK,
\qquad D(A_K):=D(A).
\end{equation}
For $\delta>0$, we introduce the stabilization prefactor
\begin{equation}\label{eq:CSdeltaK}
C_{\rm S}(\delta,K)
:=
\sup_{t\ge0}e^{\delta t}
\norm{e^{tA_K}}_{\Lcal(X)}
\in[1,+\infty],
\end{equation}
and its optimal value
\begin{equation}\label{eq:ChatS}
\ChatS(\delta)
:=
\inf_{K\in\Lcal(X)}C_{\rm S}(\delta,K),
\qquad \delta>0.
\end{equation}
Thus $\ChatS(\delta)$ is the optimal stabilization prefactor over all
bounded stationary feedbacks for the prescribed decay rate $\delta$, and
our problem is to determine its asymptotic growth as $\delta\to\infty$.

To the best of our knowledge, the asymptotic behavior of the optimal
stabilization prefactor $\ChatS(\delta)$ was first studied by Quan and
Wang~\cite{QuanWang2026} for finite-dimensional linear systems, where a
polynomial growth law is obtained through controllability-chain arguments.
The present paper is self-contained and treats the free Schr\"odinger
equation. Although its lower-bound argument is motivated by the
finite-dimensional chain idea, the mechanism is different: it uses localized
cutoff functions in an uncontrolled open region and the locality of the
Laplacian, leading instead to the exponential-square-root growth law.

To study this problem, we make the following assumptions.

\medskip
\noindent\textbf{(H1) Local uncontrolled region.}
There exist nonempty open sets $O_0,O$ such that
\begin{equation}\label{eq:uncontrolled-open-set}
O_0\Subset O\Subset\Omega\setminus\overline\omega.
\end{equation}
In the torus case, $O_0$ and $O$ are chosen inside a single flat coordinate chart.

\medskip
\noindent\textbf{(H2) Quantitative small-time observability.}
For every $T>0$, there exists an observability constant $C(T)>0$ such that
\begin{equation}\label{eq:observability-inequality}
\norm{z}_X^2
\le
C(T)\int_0^T\norm{BU(t)z}_X^2\,dt,
\qquad z\in X.
\end{equation}
Moreover, there exist constants $C_0,T_0>0$ such that the constants $C(T)$
can be chosen to satisfy
\begin{equation}\label{eq:observability-growth}
C(T)\le C_0e^{C_0/T},
\qquad 0<T\le T_0.
\end{equation}
Only the small-time estimate \eqref{eq:observability-growth} is used in the
large-$\delta$ asymptotic analysis. 

\begin{remark}\label{remark1.1-9-16}
$(i)$ Assumption {\rm(H1)} is equivalent to requiring that
$\Omega\setminus\overline\omega$ contain a nonempty open set. Indeed, if
$W\subset\Omega\setminus\overline\omega$ is nonempty and open, then one
can choose nonempty open sets
\[
O_0\Subset O\Subset W.
\]
In the torus case, these sets can be chosen inside a single flat coordinate chart.
 The nested pair $O_0\Subset O$ is introduced only for convenience
in the cutoff construction used later in the proof of the lower bound.

$(ii)$ Assumption {\rm(H1)} means that the control region $\omega$ leaves a
genuine open hole in the interior of $\Omega$.

$(iii)$ Assumption {\rm(H2)} is satisfied in several important situations,
which are presented in Section~\ref{sec:consequences}.
\end{remark}

Our main result identifies the optimal exponential growth scale of the
stabilization prefactor.

\begin{theorem}\label{thm:main}
Assume {\rm(H1)} and {\rm(H2)}. Then there exist constants
$c,C,\delta_0>0$ such that
\begin{equation}\label{eq:main-two-sided}
e^{c\sqrt\delta}
\le
\ChatS(\delta)
\le
e^{C\sqrt\delta},
\qquad \delta\ge\delta_0.
\end{equation}
Moreover,
\begin{equation}\label{eq:loglog-limit}
\lim_{\delta\to+\infty}
\frac{\log\log\ChatS(\delta)}{\log\delta}
=
\frac12.
\end{equation}
In particular,
\[
\log\ChatS(\delta)\asymp\sqrt\delta
\qquad\text{as }\delta\to+\infty .
\]
\end{theorem}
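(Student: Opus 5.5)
The theorem has an upper bound and a lower bound. I would prove them separately and then read off \eqref{eq:loglog-limit} directly from \eqref{eq:main-two-sided}. Indeed, taking $\log\log$ gives $\log(c\sqrt\delta)\le\log\log\ChatS(\delta)\le\log(C\sqrt\delta)$; dividing by $\log\delta$ and letting $\delta\to\infty$ gives the limit $1/2$.

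\emph{Upper bound.} I would use an exponentially weighted Gramian feedback of Urquiza/Komornik type, with a finite horizon $T$ tied to $\delta$. For $\lambda>0$ and $T>0$ set
\[
\Lambda_{\lambda,T}:=\int_0^{T}e^{-2\lambda t}U(-t)BB^*U(-t)^*\,dt ,
\]
possibly with the Komornik endpoint weight $e_\lambda(t)=e^{-2\lambda t}$ extended to $[0,T+1/(2\lambda)]$ so that the boundary term has a sign. Apply the observability inequality \eqref{eq:observability-inequality} to the backward group; this is possible since $U(-t)=U(t)^*$ and $B^*=B$. It gives
\[
\ip{\Lambda z}{z}\ge e^{-2\lambda T}C(T)^{-1}\norm z^2 ,
\]
and also $\norm{\Lambda}\le T$. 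Take $K=-B^*\Lambda^{-1}$. The standard Lyapunov computation with $V(y)=\ip{\Lambda^{-1}y}{y}$ gives $\frac{d}{dt}V\le-2\lambda V$, so
\[
\norm{e^{tA_K}}\le \sqrt{\norm{\Lambda}\,\norm{\Lambda^{-1}}}\,e^{-\lambda t}\le \sqrt{T\,e^{2\lambda T}C(T)}\,e^{-\lambda t}.
\]
Now choose $\lambda=\delta$ and $T=\delta^{-1/2}$, and use \eqref{eq:observability-growth}. The prefactor is then at most
\[
\bigl(T\,C_0\,e^{2\sqrt\delta+C_0\sqrt\delta}\bigr)^{1/2}\le e^{C\sqrt\delta}.
\]
The algebra is routine. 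The only care needed is to write the Lyapunov identity on $D(A)$ and extend by density, and to handle the endpoint term.

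\emph{Lower bound.} This is the main obstacle. Fix an arbitrary $K\in\Lcal(X)$ with $C_{\rm S}(\delta,K)=M<\infty$. The aim is to show $M\ge e^{c\sqrt\delta}$.

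The mechanism is that, starting from data supported in $O_0$, the control $BK y$ lives in $\omega$. Any influence of the control on $O_0$ must travel through the Laplacian across the gap between $O_0$ and $\omega$. On short time scales this coupling is weak, like a chain of length $N$ in the finite-dimensional case.

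The plan is as follows.
\begin{enumerate}
\item Take cutoffs $\chi_0,\chi_1,\dots,\chi_N$ nested between $O_0$ and $O$, with $\chi_{j}=1$ on $\operatorname{supp}\chi_{j-1}$ and all supports contained in $O$. Use Gevrey/Denjoy--Carleman type bounds $\norm{\partial^\alpha\chi_j}\lesssim (CN)^{|\alpha|}$ so that the commutators satisfy $[\Delta,\chi_j]\sim N\nabla+N^2$.
\item Since $\chi_jB=0$, Duhamel gives
\[
\chi_j y(t)=\chi_j U(t)y_0+\int_0^t\chi_jU(t-s)BKy(s)\,ds .
\]
Instead I would use the local equation
\[
\partial_t(\chi_jy)=i\Delta(\chi_jy)+i[\Delta,\chi_j]\chi_{j+1}y ,
\]
which contains no control term on $O$. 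Iterate this $N$ times, each step gaining a factor $t$ and costing a factor $N^2$. The result is that $\chi_0y(t)$ is within $(CN^2t)^N/N!\cdot M$ of $\chi_0U(t)y_0$ in a suitable negative Sobolev norm, or in $L^2$ for data of controlled frequency.
\item Choose $y_0$ to be a normalized bump in $O_0$ with frequency about $\sqrt\delta$, so that $\norm{\chi_0U(t)y_0}\ge 1/2$ for $t\le t_0\sim\delta^{-1/2}$. Stability forces $\norm{y(t)}\le Me^{-\delta t}$. At $t=t_0$, with $N\sim\sqrt\delta$, this yields
\[
\tfrac12\le Me^{-c'\sqrt\delta}+M(Ct_0N^2/N)^N .
\]
\item Balance the parameters so the last term is at most $\tfrac14$ unless $M$ is huge. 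This gives $M\ge e^{c\sqrt\delta}$.
\end{enumerate}
The step I expect to be delicate is making the parameter balance in steps 3--4 consistent: the derivative losses in the iterated commutators must be controlled by working with frequency-localized data. If this fails, I would fall back on an energy/Carleman estimate on $O$ with weight $e^{\sqrt\delta\,\varphi(x)}$ to get the same $e^{c\sqrt\delta}$ suppression.
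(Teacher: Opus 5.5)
Your upper bound and the $\log\log$ step are fine. The finite-horizon Gramian with Lyapunov function $\ip{\Lambda^{-1}y}{y}$ is a harmless variant of the paper's $N_\delta=\int_0^\infty e^{-\delta t}U(-t)BB^*U(t)\,dt$, which uses the exact similarity $e^{tA_K}=e^{-\delta t}N_\delta U(t)N_\delta^{-1}$. Your boundary term $-e^{-2\lambda T}U(-T)BB^*U(T)$ already has the right sign, so no Komornik extension is needed.

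The gap is in the lower bound, specifically step 2.

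\begin{itemize}
\item \emph{The $L^2$ option fails.} The error $e(t)=y(t)-U(t)y_0$ is driven by $BKy$ with $K\in\Lcal(X)$ arbitrary, so all you know is $\norm{e(s)}\le(M+1)\norm{y_0}$. Localizing $y_0$ in frequency gives no control on the frequencies of $e$. Schr\"odinger waves of frequency $\xi$ travel at speed $2|\xi|$, so a packet injected in $\omega$ can reach $O_0$ by any time $t>0$. Hence $\norm{\chi_0e(t)}_{L^2}$ need not be small, and the ``$L^2$ for data of controlled frequency'' option is not available.
\item \emph{The negative-norm option is not justified.} The $N$ first-order operators $[\Delta,\chi_j]$ must be bounded from $H^{-k}$ to $H^{-k-1}$ for $k$ up to $N$. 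This needs about $k$ derivatives of coefficients that vary on scale $1/N$. Your bound $\norm{\partial^\alpha\chi_j}\lesssim(CN)^{|\alpha|}$ fails for $|\alpha|\sim N$: if $\chi_j$ rises from $0$ to $1$ across a layer of width $\rho\sim d/N$, Taylor's formula from a boundary point of its support forces $\norm{\partial_v^k\chi_j}_\infty\ge k!/\rho^k$. Nothing in the sketch controls how these losses compound over the $N$ levels, so ``each step costs $N^2$'' is unjustified.
\item \emph{The high-frequency data works against you.} Data at frequency $\sqrt\delta\sim N/\varepsilon$ is damped by a factor $e^{-cN}$ in any negative norm adapted to scale $1/N$. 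So the main term $\tfrac12$ in step 3 does not survive the change of norm. The high frequency is not needed anyway: a fixed bump stays essentially in $O_0$ for $O(1)$ times, and $t_0=\delta^{-1/2}$ already gives $e^{-\delta t_0}=e^{-\sqrt\delta}$.
\end{itemize}
The Carleman fallback is only a pointer, not an argument.

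The missing idea is to dualize, so that all derivatives fall on one explicit test function and no commutators arise. Since $A_K^*=-i\Delta_\Omega+K^*B$ and $B$ annihilates everything supported in $O$, locality of $\Delta_\Omega$ gives exactly $(A_K^*)^r\phi=(-i)^r\Delta_\Omega^r\phi$ for $\phi\in C_c^\infty(O)$. Take a single Ehrenpreis cutoff $\phi_r$, equal to $1$ on $O_0$, with derivative bounds up to order $2r$. Then $\norm{\phi_r}\ge|O_0|^{1/2}$ and $\norm{\Delta_\Omega^r\phi_r}\le(C_1r)^{2r}$. The decay $\norm{e^{tA_K^*}}\le Me^{-\delta t}$, together with $G^{-r}=\frac{(-1)^r}{(r-1)!}\int_0^\infty t^{r-1}e^{tG}\,dt$, gives $\norm{(A_K^*)^{-r}}\le M\delta^{-r}$. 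Hence $|O_0|^{1/2}\le M(C_1^2r^2/\delta)^r$, and $r=\lfloor\varepsilon\sqrt\delta\rfloor$ with $C_1\varepsilon<\tfrac12$ yields $M\ge e^{c\sqrt\delta}$ uniformly in $K$. This is the paper's argument.

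A time-domain version of your own picture also closes. The Taylor expansions of $e^{tA_K^*}\phi_r$ and $U(-t)\phi_r$ agree through order $r-1$, so $|\ip{e^{tA_K}y_0-U(t)y_0}{\phi_r}|\le\frac{t^r}{r!}(M+1)\norm{y_0}(C_1r)^{2r}$. A fixed bump $y_0\in C_c^\infty(O_0)$ with $t=\delta^{-1/2}$ and $r\sim\varepsilon\sqrt\delta$ then gives the same bound.
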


The following result gives a complementary observation to
Theorem~\ref{thm:main} by describing the extreme case in which the whole
domain is controlled.
\begin{theorem}\label{theorem1.3-9-18}
If $|\Omega\setminus\omega|=0$, equivalently $B=I$ as an operator on $X$,
then
\begin{equation}\label{eq:full-control-prefactor}
\ChatS(\delta;\omega)=1,
\qquad \delta>0.
\end{equation}
\end{theorem}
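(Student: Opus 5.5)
The statement is Theorem~\ref{theorem1.3-9-18}: when $|\Omega\setminus\omega|=0$, so that $B=I$ on $X$, we must show $\ChatS(\delta)=1$ for every $\delta>0$. We prove two inequalities.

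\emph{Lower bound.} By definition \eqref{eq:CSdeltaK}, for any $K\in\Lcal(X)$, taking $t=0$ in the supremum gives $e^{0}\norm{e^{0\cdot A_K}}_{\Lcal(X)}=\norm{I}=1$. Hence $C_{\rm S}(\delta,K)\ge1$ for all $K$, and so $\ChatS(\delta)\ge1$.

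\emph{Upper bound.} With $B=I$ we choose the explicit feedback $K:=-\delta I\in\Lcal(X)$. Then $A_K=A-\delta I$ with $D(A_K)=D(A)$. Since the scalar operator $-\delta I$ is bounded and commutes with $A$, the semigroup factorizes:
\[
e^{tA_K}=e^{-\delta t}U(t),\qquad t\ge0.
\]
To verify this rigorously, note that $V(t):=e^{-\delta t}U(t)$ is a $C_0$-semigroup. For $y\in D(A)$ it satisfies
\[
\frac{d}{dt}V(t)y=(A-\delta)V(t)y,
\]
and its generator is therefore $A-\delta I$ on $D(A)$. A generator determines its semigroup uniquely, so $V(t)=e^{tA_K}$.

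By the unitarity in \eqref{eq:unitary-properties}, $\norm{e^{tA_K}}=e^{-\delta t}$. Therefore
\[
C_{\rm S}(\delta,K)=\sup_{t\ge0}e^{\delta t}e^{-\delta t}=1,
\]
which gives $\ChatS(\delta)\le1$.

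The only point needing a line of justification is the identification of $e^{t(A-\delta I)}$ with $e^{-\delta t}U(t)$, which follows from uniqueness of the semigroup generated by a given operator. One should also record why $|\Omega\setminus\omega|=0$ is equivalent to $B=I$: multiplication by $\one_\omega$ on $L^2(\Omega)$ is the identity exactly when $\one_\omega=1$ almost everywhere. There is no real obstacle; the infimum is in fact attained at $K=-\delta I$.
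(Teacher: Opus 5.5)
Your proposal is correct and follows the paper's proof: the lower bound $C_{\rm S}(\delta,K)\ge1$ comes from evaluating the supremum at $t=0$, and the upper bound comes from the feedback $K=-\delta I$, for which $e^{t(A+BK)}=e^{-\delta t}U(t)$ and hence $C_{\rm S}(\delta,-\delta I)=1$. Your added justification of this semigroup identity, via the rescaled semigroup and uniqueness of the semigroup generated by $A-\delta I$, is slightly more explicit than the paper's one-line appeal to unitarity.
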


\begin{remark}\label{rem:main}
Theorem~\ref{thm:main} has several points worth emphasizing.

\begin{enumerate}[label=\rm(\roman*)]

\item
The theorem extends the optimal-prefactor problem studied by Quan and
Wang~\cite{QuanWang2026} from finite-dimensional systems to the free
Schr\"odinger equation. Their finite-dimensional argument relies on
Brunovsk\'y controllability chains and cannot be directly applied in the
present infinite-dimensional setting.

\item
Assumption {\rm(H1)} is used for the lower bound in
\eqref{eq:main-two-sided}. The key idea is to exploit a fundamental property of the Laplacian,
in the quantitative form given by Lemma~\ref{lem:chain-cutoff}, to
construct chains of arbitrary finite length inside an uncontrolled open
region away from the control set $\omega$.
 By the locality of the
Laplacian, all successive elements of the chain remain supported in the same
region, and hence the feedback term vanishes along the chain. This construction is inspired by the finite-dimensional Brunovsk\'y chain
argument in \cite{QuanWang2026}, but its realization here is entirely
different and is based on the existence of an uncontrolled open set. 

\item
Assumption {\rm(H2)} is used for the upper bound. The key idea is to establish
a direct quantitative relation between the small-time observability constant
and the optimal stabilization prefactor. More precisely, an estimate of the
form
$C(T)\le Ce^{C/T}$ as $T\to0$, together with the choice
$T\asymp\delta^{-1/2}$, yields
$\ChatS(\delta)\le e^{C\sqrt{\delta}}$ as $\delta\to\infty$.
Combined with the independent lower bound, this determines the optimal
exponential-square-root growth scale in \eqref{eq:main-two-sided}.
This relation is further quantified in
Corollary~\ref{cor:optimal-observability-exponent}.

\item
The full-control case provides a contrasting asymptotic regime.
If the uncontrolled region contains a nonempty open set, then
Theorem~\ref{thm:main} gives the exponential-square-root growth of the
optimal stabilization prefactor. If
$|\Omega\setminus\omega|=0$, then
\[
\ChatS(\delta;\omega)=1 .
\]
The intermediate case where $\Omega\setminus\omega$ has positive measure
but contains no nonempty open set is not covered by the present results
and remains open. Moreover, the constants in Theorem~\ref{thm:main}
need not remain uniform when the uncontrolled open region shrinks to zero,
unless the uniform assumptions of Theorem~\ref{thm:omega-robustness}
are satisfied.

\end{enumerate}
\end{remark}

The next theorem describes the robustness of the optimal stabilization
prefactor with respect to variations of the control region. When the
dependence on $\omega$ is made explicit, we write $\ChatS(\delta;\omega)$
for the corresponding optimal stabilization prefactor.

For a family of control regions, we use the following uniform versions of
{\rm(H1)} and {\rm(H2)}.

\medskip
\noindent\textbf{($\widehat{\rm H1}$) Uniform local uncontrolled region.}
There exist fixed nonempty open sets
$O_0\Subset O\Subset\Omega$ such that
\begin{equation}\label{eq:uniform-hole}
O\Subset\Omega\setminus\overline{\omega_\eta},
\qquad \eta\in\mathcal I .
\end{equation}
In the torus case, $O_0$ and $O$ are chosen 
inside a fixed flat coordinate chart.

\medskip
\noindent\textbf{($\widehat{\rm H2}$) Uniform quantitative small-time observability.}
There exist constants $C_0,T_0>0$, independent of $\eta$, such that
\begin{equation}\label{eq:uniform-observability}
\norm{z}_X^2
\le
C_0e^{C_0/T}
\int_0^T
\norm{\one_{\omega_\eta}U(t)z}_X^2\,dt,
\qquad z\in X,
\end{equation}
for every $\eta\in\mathcal I$ and every $0<T\le T_0$.

\begin{theorem}\label{thm:omega-robustness}
Let
$\{\omega_\eta\}_{\eta\in\mathcal I}$ be a family of nonempty open
control regions satisfying the uniform assumptions
$(\widehat{\rm H1})$ and $(\widehat{\rm H2})$.
Then there exist $c,C,\delta\sb{0}>0$, independent of $\eta$, such that
\begin{equation}\label{eq:uniform-prefactor}
e^{c\sqrt\delta}
\le
\ChatS(\delta;\omega\sb{\eta})
\le
e^{C\sqrt\delta},
\qquad \delta\ge\delta\sb{0},
\quad \eta\in\mathcal I.
\end{equation}
\end{theorem}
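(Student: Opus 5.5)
The plan is to track constants in the two halves of the proof of Theorem~\ref{thm:main} and check that each depends only on the quantities fixed uniformly by $(\widehat{\rm H1})$ and $(\widehat{\rm H2})$.

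\textbf{Lower bound.} Fix the sets $O_0\Subset O$ from $(\widehat{\rm H1})$, which are independent of $\eta$. Build the cutoff chain of Lemma~\ref{lem:chain-cutoff}: a function $\varphi\in C_c^\infty(O)$ with $\varphi\equiv1$ on $O_0$, together with the sequence $\varphi_k=\Delta^k\varphi$ (or a rescaled variant). Its quantitative bounds, of the type $\|\Delta^k\varphi\|\le (Ck)^{2k}$ with a matching lower bound, depend only on $O_0$, $O$ and the flat chart. These bounds do not involve $\omega_\eta$.

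Since $\operatorname{supp}\varphi_k\subset O\subset\Omega\setminus\overline{\omega_\eta}$, we have $B_\eta\varphi_k=\one_{\omega_\eta}\varphi_k=0$ for every $\eta$. Consequently the quantity tested against $e^{tA_K}$ evolves as under the free group along the chain, whatever $K$ is. The argument of the lower bound in Theorem~\ref{thm:main} uses the closed-loop identity
\[
\frac{d}{dt}\ip{e^{tA_K}y}{\psi}=\ip{e^{tA_K}y}{A^*\psi}+\ip{Ke^{tA_K}y}{B^*\psi},
\]
applied with $\psi=\varphi_k$. The feedback term vanishes, and Taylor/Laplace-type estimates then force $\sup_t e^{\delta t}\|e^{tA_K}\|\ge e^{c\sqrt\delta}$ after optimizing the chain length $k\asymp\sqrt\delta$. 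The constants $c$ and $\delta_0$ produced this way are functions only of the chain constants. They are therefore uniform in $\eta$.

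\textbf{Upper bound.} Follow the Gramian construction used for Theorem~\ref{thm:main}. With $T\asymp\delta^{-1/2}$, define the weighted Gramian
\[
\Lambda_{\eta}=\int_0^{T}e^{-2\delta' t}U(-t)B_\eta B_\eta^*U(-t)^*\,dt
\]
(or the variant with weight vanishing at $T$, Urquiza/Komornik type), where $\delta'$ is a fixed multiple of $\delta$. Take the feedback $K_\eta=-B_\eta^*\Lambda_\eta^{-1}$. The standard Lyapunov argument gives
\[
\|e^{tA_{K_\eta}}\|\le \bigl(\|\Lambda_\eta\|\,\|\Lambda_\eta^{-1}\|\bigr)^{1/2}e^{-\delta t}.
\]
The bound $\|\Lambda_\eta\|\le T$ uses only $\|B_\eta\|\le1$ and unitarity of $U$. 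The bound $\|\Lambda_\eta^{-1}\|\le e^{2\delta' T}C_0e^{C_0/T}$ follows from \eqref{eq:uniform-observability}. Both bounds are uniform in $\eta$, and so is the resulting $e^{C\sqrt\delta}$ together with the threshold $\delta_0$, which only requires $T\le T_0$.

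Finally, take $\delta_0$ to be the maximum of the two thresholds. The main obstacle is the lower bound. There one must verify that the proof of Theorem~\ref{thm:main} never implicitly uses $\omega$ beyond the vanishing $B\varphi_k=0$. For example, it must not normalize by quantities such as $\|B\|$ or use a distance to $\partial\omega$ other than through $O$. I would first reread that proof, list every constant it introduces, and confirm that each is a function of $(O_0,O,n)$ alone.
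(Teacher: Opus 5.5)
Your proposal is correct and has the same overall architecture as the paper's proof. The lower-bound constants come from Lemma~\ref{lem:chain-cutoff} applied to the fixed pair $O_0\Subset O$: $c_0=|O_0|^{1/2}$ and $C_1$ depend only on $n,O_0,O$, and $c,\delta_1$ depend only on $c_0,C_1$. The upper-bound constants depend only on $\norm{B_\eta}\le1$, unitarity of $U$ and the uniform pair $(C_0,T_0)$, and $\delta_0$ is the larger threshold.

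Your lower-bound mechanism is the dual form of the paper's argument. You differentiate $\ip{e^{tA_K}y}{\psi}$ up to $r$ times along the chain, where the feedback term drops out, and integrate back from $t=\infty$ against $t^{r-1}/(r-1)!$. The paper instead applies Lemma~\ref{lem:negative-power} to $G=A_K^*$. Both give the same inequality $\norm{\phi_r}\le C_{\rm S}(\delta,K)\delta^{-r}\norm{\Delta_\Omega^r\phi_r}$.

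The genuine difference is in the upper bound.
\begin{itemize}
\item The paper uses the infinite-horizon Gramian $N_\delta=\int_0^\infty e^{-\delta t}U(-t)BB^*U(t)\,dt$. It proves $N_\delta D(A)\subset D(A)$ and the Lyapunov identity, then identifies generators to obtain the exact similarity $e^{tL_\delta}=e^{-\delta t}N_\delta U(t)N_\delta^{-1}$. The prefactor is therefore at most $\norm{N_\delta}\norm{N_\delta^{-1}}\le C(T)e^{\delta T}/\delta$.
\item Your truncated Gramian with the Lyapunov functional $\ip{\Lambda_\eta^{-1}x}{x}$ also works. Along trajectories its derivative is $-2\delta'\ip{\Lambda_\eta^{-1}x}{x}-\norm{B_\eta\Lambda_\eta^{-1}x}^2-e^{-2\delta'T}\norm{B_\eta U(T)\Lambda_\eta^{-1}x}^2$. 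The truncation at $T$ only adds the last nonpositive term. You get rate $\delta'$ (take $\delta'\ge\delta$) with prefactor $(\norm{\Lambda_\eta}\norm{\Lambda_\eta^{-1}})^{1/2}$.
\end{itemize}
Your route avoids the domain-invariance and generator-identification steps, at the cost of a standard density justification of the derivative computation. The paper's route gives an explicit formula for the closed-loop group. Both yield $e^{O(\sqrt\delta)}$ uniformly in $\eta$.

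One correction to how you describe the chain. It cannot be a single $\varphi$ with $\varphi_k=\Delta^k\varphi$ satisfying $\norm{\Delta^k\varphi}\le(Ck)^{2k}$ for all $k$. Such a $\varphi\in C_c^\infty(O)$ would be real-analytic, hence zero. A fixed Gevrey-$s$ cutoff would only give $e^{c\delta^{1/(2s)}}$ with $s>1$.

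Lemma~\ref{lem:chain-cutoff} instead takes a fresh Ehrenpreis cutoff $\phi_r=\chi_{2r}$ for each chain length $r$, with derivative bounds only up to order $2r$; the optimization is over $r$, with the function changing with $r$. You invoke that lemma and only need its constants to depend on $(n,O_0,O)$, so the uniformity argument is unaffected.
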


\begin{remark}\label{remark1.5-9-16}
$(i)$
Theorem~\ref{thm:omega-robustness} shows that the
exponential-square-root growth law is uniform as long as a fixed
uncontrolled open region and a uniform small-time observability estimate
are preserved. In particular, enlarging the control region preserves the
observability estimate, provided that a fixed open hole remains, while
shrinking the control region preserves the lower-bound mechanism and the
upper bound remains uniform whenever the observability estimate is uniform.

$(ii)$
A simple sufficient condition for
$(\widehat{\rm H1})$ and $(\widehat{\rm H2})$ is the following. Suppose
that there exist two fixed open sets $\omega_-$ and $\omega_+$ such that
$$
\omega_-\subset\omega_\eta\subset\omega_+,
\qquad \eta\in\mathcal I.
$$
Assume that there exist fixed nonempty open sets
$$
O_0\Subset O\Subset
\Omega\setminus\overline{\omega_+}.
$$
In the torus case, $O_0$ and $O$ are chosen  inside a fixed flat coordinate chart.
 Since $\omega_\eta\subset\omega_+$,
$$
O_0\Subset O\Subset
\Omega\setminus\overline{\omega_\eta},
\qquad \eta\in\mathcal I,
$$
and hence $(\widehat{\rm H1})$ holds.

Assume further that $\omega_-$ satisfies {\rm(H2)} with constants
$C_0,T_0>0$. Since $\omega_-\subset\omega_\eta$,
$$
\int_0^T
\norm{\one_{\omega_-}U(t)z}_X^2\,dt
\le
\int_0^T
\norm{\one_{\omega_\eta}U(t)z}_X^2\,dt,
$$
and therefore
$$
\norm{z}_X^2
\le
C_0e^{C_0/T}
\int_0^T
\norm{\one_{\omega_\eta}U(t)z}_X^2\,dt,
\qquad
z\in X,\quad 0<T\le T_0,
$$
for every $\eta\in\mathcal I$. Thus $(\widehat{\rm H2})$ holds with
constants independent of $\eta$.
\end{remark}

\section{Proofs of the main theorems}

This section is divided into three subsections. The first establishes the
lower bound in Theorem~\ref{thm:main}, and the second proves the upper
bound. The final subsection completes the proofs of
Theorems~\ref{thm:main}, \ref{theorem1.3-9-18}, and
\ref{thm:omega-robustness}, and derives
Corollary~\ref{cor:optimal-observability-exponent}.

\subsection{Quantitative cutoffs and the lower bound}
\label{sec:lower}

Our strategy is as follows. The lower bound must hold uniformly with
respect to the feedback operator $K$. Instead of estimating the feedback
term, we use the hole condition (H1) to make it disappear on suitable
test functions. The key is to work with the adjoint generator
$A_K^*=-i\Delta_\Omega+K^*B$: if a smooth function is supported in the
uncontrolled open set $O$ in \eqref{eq:uncontrolled-open-set}, then
$B=\one_\omega$ annihilates it, and hence the feedback term $K^*B$
vanishes. Since the Laplacian is local, the same property holds for all
successive Laplacian iterates. Thus, the adjoint closed-loop dynamics
coincide with the free Schr\"odinger dynamics along a finite chain.

The remaining issue is to control the size of these iterates. For each
integer $r\ge1$, Lemma~\ref{lem:chain-cutoff} provides a function
$\phi_r\in C_c^\infty(O)$ satisfying
$\norm{\phi_r}_X\ge c_0$ and
$\norm{\Delta_\Omega^r\phi_r}_X\le(C_1r)^{2r}$, with constants independent
of $r$ and $K$. Combining
\[
(A_K^*)^r\phi_r=(-i)^r\Delta_\Omega^r\phi_r
\]
with the negative-power estimate for $(A_K^*)^{-r}$ gives
\[
C_{\rm S}(\delta,K)
\ge c_0\left(\frac{\delta}{C_1^2r^2}\right)^r .
\]
Choosing $r\sim\sqrt\delta$ yields the lower bound
$C_{\rm S}(\delta,K)\ge e^{c\sqrt\delta}$ uniformly in $K$. Thus, the
hole condition removes the feedback from the argument, while the
quantitative cutoff estimate determines the $\sqrt\delta$ scale.

We begin with the negative-power estimate needed in this argument.
The following standard semigroup estimate follows from the
resolvent-power formula; see \cite[Corollary~II.1.11]{EngelNagel2000}.

\begin{lemma}\label{lem:negative-power}
Let $G$ generate a strongly continuous semigroup on a Hilbert space and
assume that, for some $M\ge1$ and $\delta>0$,
\[
\norm{e^{tG}}\le Me^{-\delta t},
\qquad t\ge0.
\]
Then $G$ is invertible and, for every integer $r\ge1$,
\[
G^{-r}
=\frac{(-1)^r}{(r-1)!}
\int_0^\infty t^{r-1}e^{tG}\,dt.
\]
In particular,
\[
\norm{G^{-r}}\le M\delta^{-r}.
\]
\end{lemma}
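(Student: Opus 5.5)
The plan is to prove the integral formula for $G^{-1}$ first and then obtain higher powers by an iterated-integral (convolution) computation. The bound $\norm{G^{-r}}\le M\delta^{-r}$ will then follow by estimating the integral directly.

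\textbf{Step 1: invertibility and the case $r=1$.} Because $\norm{e^{tG}}\le Me^{-\delta t}$, the Bochner integral
\[
R_1x:=\int_0^\infty e^{tG}x\,dt
\]
converges absolutely for every $x$, with $\norm{R_1}\le M/\delta$. We verify $R_1=-G^{-1}$ in two parts.

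For $x\in D(G)$, we have $\frac{d}{dt}e^{tG}x=e^{tG}Gx$. Hence
\[
R_1Gx=\lim_{\tau\to\infty}\bigl(e^{\tau G}x-x\bigr)=-x,
\]
using the decay of $e^{\tau G}x$.

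For arbitrary $x$, we use the standard semigroup identity $G\int_0^\tau e^{tG}x\,dt=e^{\tau G}x-x$. Since $G$ is closed, letting $\tau\to\infty$ gives $R_1x\in D(G)$ and $GR_1x=-x$.

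Together these show that $G$ is bijective from $D(G)$ onto the space, with $G^{-1}=-R_1$ bounded. This is exactly the statement that $0\in\rho(G)$, and it is the case $\lambda=0$ of the Hille--Yosida Laplace-transform representation of the resolvent, as in the cited corollary of Engel--Nagel.

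\textbf{Step 2: higher powers.} We argue by induction. Suppose
\[
G^{-r}=\frac{(-1)^r}{(r-1)!}\int_0^\infty t^{r-1}e^{tG}\,dt.
\]
Then
\[
G^{-(r+1)}=G^{-r}G^{-1}=\frac{(-1)^{r+1}}{(r-1)!}\int_0^\infty\!\!\int_0^\infty t^{r-1}e^{(t+s)G}\,ds\,dt.
\]
Moving $G^{-1}=-R_1$ inside the integral is legitimate because $G^{-1}$ is bounded and commutes with the semigroup.

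Next, substitute $\tau=t+s$ and use Fubini, which applies because the double integral converges absolutely thanks to the exponential bound. The inner integral becomes
\[
\int_0^\tau t^{r-1}\,dt=\frac{\tau^r}{r}.
\]
This gives
\[
G^{-(r+1)}=\frac{(-1)^{r+1}}{r!}\int_0^\infty\tau^{r}e^{\tau G}\,d\tau,
\]
which completes the induction.

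An alternative route is to differentiate the resolvent formula $R(\lambda,G)=\int_0^\infty e^{-\lambda t}e^{tG}\,dt$ $(r-1)$ times in $\lambda$ at $\lambda=0$. This yields the same identity.

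\textbf{Step 3: the norm bound.} Taking norms inside the integral,
\[
\norm{G^{-r}}\le\frac{M}{(r-1)!}\int_0^\infty t^{r-1}e^{-\delta t}\,dt=\frac{M}{(r-1)!}\cdot\frac{(r-1)!}{\delta^r}=M\delta^{-r}.
\]

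None of the steps is a serious obstacle. The only point requiring care is justifying that $R_1x\in D(G)$ in Step 1. This is handled by the closedness of $G$ combined with the identity $G\int_0^\tau e^{tG}x\,dt=e^{\tau G}x-x$. The exchanges of integrals and bounded operators in Step 2 are routine, since all integrals are absolutely convergent in operator norm.
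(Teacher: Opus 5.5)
Your proof is correct. It is a direct, self-contained derivation of the resolvent-power formula $R(\lambda,G)^r=\frac{1}{(r-1)!}\int_0^\infty t^{r-1}e^{-\lambda t}e^{tG}\,dt$ at $\lambda=0$, using $R(0,G)=-G^{-1}$, which is exactly what the paper invokes by citing Engel--Nagel, Corollary~II.1.11; your convolution induction simply replaces the usual differentiation in $\lambda$.

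One wording point: the integrals should be read strongly, i.e.\ applied to each fixed $x$ as you do when defining $R_1x$. They are not Bochner integrals in operator norm, since $t\mapsto e^{tG}$ is in general only strongly continuous. Your Fubini and operator-exchange steps go through unchanged in this pointwise sense.
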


We also use the following standard finite-order Ehrenpreis cutoff
construction; see \cite[Proposition~3.2.1]{Treves2022}.

\begin{lemma}\label{lem:cutoff}
Let $O_0\Subset O\Subset\Omega$ be nonempty open sets, with $O$ contained in a flat coordinate chart in the torus case.
 There exists a constant
$C_0\ge1$ such that, for every integer $N\ge1$, there exists
$\chi_N\in C_c^\infty(O)$ satisfying
\[
0\le\chi_N\le1,
\qquad \chi_N=1\ \text{on }O_0,
\]
and
\[
\norm{\partial^\alpha\chi_N}_{L^\infty(\Omega)}
\le C_0(C_0N)^{|\alpha|},
\qquad |\alpha|\le N.
\]
\end{lemma}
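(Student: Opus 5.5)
The plan is a quantitative Ehrenpreis construction in the flat chart containing $O$; on the torus this chart is given by (H1). Fix $\varepsilon>0$ with $\varepsilon<\tfrac12\operatorname{dist}(O_0,\R^n\setminus O)$, and set $\rho:=\varepsilon/N$. Let $\psi\in C_c^\infty(B(0,1))$ be a fixed mollifier with $\psi\ge0$ and $\int\psi=1$, and put $\psi_\rho(x)=\rho^{-n}\psi(x/\rho)$. Write $h:=\one_{O_1}$, where $O_1:=\{x:\operatorname{dist}(x,O_0)<\varepsilon\}$. Define
\[
\chi_N:=h*\underbrace{\psi_\rho*\cdots*\psi_\rho}_{N\ \text{factors}}.
\]

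The basic properties follow directly. The support of the $N$-fold convolution lies in $B(0,N\rho)=B(0,\varepsilon)$. Hence $\operatorname{supp}\chi_N\subset\{\operatorname{dist}(\cdot,O_1)\le\varepsilon\}$, which is a compact subset of $O$ by the choice of $\varepsilon$. For $x\in O_0$ the ball $B(x,\varepsilon)$ lies in $O_1$, so $\chi_N(x)=\int\one_{O_1}(x-y)\,\Psi(y)\,dy=1$, where $\Psi$ is the iterated kernel, a probability density. Since $h$ takes values in $[0,1]$ and $\Psi$ is a probability density, $0\le\chi_N\le1$. Smoothness is clear because the kernel is smooth.

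The derivative bound comes from distributing derivatives over the factors. For $|\alpha|\le N$, write $\alpha=e_{j_1}+\dots+e_{j_{|\alpha|}}$ and let each first-order derivative fall on a different copy of $\psi_\rho$:
\[
\partial^\alpha\chi_N
= h*(\partial_{j_1}\psi_\rho)*\cdots*(\partial_{j_{|\alpha|}}\psi_\rho)*\psi_\rho^{*(N-|\alpha|)}.
\]
Young's inequality gives $\|\partial^\alpha\chi_N\|_{L^\infty}\le\|h\|_{L^\infty}\prod_k\|\partial_{j_k}\psi_\rho\|_{L^1}\cdot1$. Scaling gives $\|\partial_j\psi_\rho\|_{L^1}=\rho^{-1}\|\partial_j\psi\|_{L^1}\le M N/\varepsilon$ with $M:=\max_j\|\partial_j\psi\|_{L^1}$. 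Therefore
\[
\|\partial^\alpha\chi_N\|_{L^\infty}\le (MN/\varepsilon)^{|\alpha|},
\]
and $C_0:=\max(1,M/\varepsilon)$ works uniformly in $N$. The constant depends only on $O_0$, $O$ and $\psi$.

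The key point is that the number of factors equals $N$, so that each derivative costs only one factor of $N/\varepsilon$ rather than the factorial growth a single mollification would give. The restriction $|\alpha|\le N$ is exactly what allows every derivative to land on a distinct factor. On the torus the construction is carried out in the chart and transplanted. On $\Omega$ itself one extends $\chi_N$ by zero, which is legitimate since the support is compact in $O\Subset\Omega$.
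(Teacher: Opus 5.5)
Your proof is correct: convolving $\one_{O_1}$ with the $N$-fold product $\psi_{\varepsilon/N}^{*N}$ and placing one derivative on each of $|\alpha|\le N$ distinct factors is exactly the standard Ehrenpreis construction, which the paper does not prove but cites from Treves (Proposition~3.2.1). Your support, normalization, and $L^1$-scaling computations all check out, and transplanting the construction through the flat chart in the torus case is legitimate.
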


We next state the quantitative cutoff estimate for the lower bound.

\begin{lemma}[Quantitative Schr\"odinger chain cutoff]\label{lem:chain-cutoff}
Assume \eqref{eq:uncontrolled-open-set}. Then there exist constants
$c_0,C_1>0$ such that, for every integer $r\ge1$, there exists
$\phi_r\in C_c^\infty(O)$ satisfying
\begin{equation}\label{eq:phi-lower}
\norm{\phi_r}_{L^2(\Omega)}\ge c_0
\end{equation}
and
\begin{equation}\label{eq:laplacian-upper}
\norm{\Delta_\Omega^r\phi_r}_{L^2(\Omega)}
\le(C_1r)^{2r}.
\end{equation}
\end{lemma}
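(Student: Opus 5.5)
We take $\phi_r:=\chi_{2r}$, where $\chi_N$ is the cutoff of Lemma~\ref{lem:cutoff} applied to the nested pair $O_0\Subset O$ from \eqref{eq:uncontrolled-open-set}, with $N=2r$. This function lies in $C_c^\infty(O)$. Since $\chi_{2r}=1$ on $O_0$, we get
\[
\norm{\phi_r}_{L^2(\Omega)}\ge |O_0|^{1/2}=:c_0>0 .
\]
This constant is independent of $r$, which gives \eqref{eq:phi-lower}.

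\emph{Locality of the Laplacian.} Because $\phi_r$ has compact support in the interior set $O\Subset\Omega$ (inside a flat chart in the torus case), every iterate $\Delta^k\phi_r$ is smooth and supported in $O$. Such functions lie in $D(\Delta_\Omega)$: in case (iii) they vanish near $\partial\Omega$, so the Dirichlet condition holds trivially. Hence $\Delta_\Omega^r\phi_r$ coincides with the classical Euclidean iterate
\[
\Delta^r\phi_r=\Bigl(\sum_{j=1}^n\partial_j^2\Bigr)^r\phi_r ,
\]
computed in coordinates.

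\emph{Expansion of $\Delta^r$.} By the multinomial theorem,
\[
\Delta^r=\sum_{|\beta|=r}\frac{r!}{\beta!}\,\partial^{2\beta},
\]
and $\sum_{|\beta|=r} r!/\beta!=n^r$. Each derivative has order $|2\beta|=2r\le N$, so Lemma~\ref{lem:cutoff} applies and gives
\[
\norm{\partial^{2\beta}\chi_{2r}}_{L^\infty}\le C_0(2C_0r)^{2r}.
\]
Since the support lies in $O$, we obtain
\[
\norm{\Delta_\Omega^r\phi_r}_{L^2}\le |O|^{1/2}\,n^r\,C_0\,(2C_0r)^{2r}.
\]
For $r\ge1$ the prefactor satisfies $|O|^{1/2}C_0 n^r\le (C' )^{2r}$ for a suitable constant $C'\ge1$. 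Setting $C_1:=2C_0C'$ then yields $(C_1r)^{2r}$, which is \eqref{eq:laplacian-upper}.

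The main point requiring care is that Lemma~\ref{lem:cutoff} gives derivative bounds only up to order $N$. We therefore need $N$ tied to $2r$, with constants uniform in $N$. This is exactly what the Ehrenpreis construction provides, and our choice $N=2r$ accounts for it. The rest is bookkeeping: absorbing the geometric factors $n^r$, $|O|^{1/2}$ and $C_0$ into $C_1^{2r}$. We also note that in the torus case the flat chart makes $\Delta$ the Euclidean Laplacian on $O$.
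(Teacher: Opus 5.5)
Your proof is correct and follows the paper's argument essentially step for step. You make the same choice $\phi_r=\chi_{2r}$, get the same lower bound $|O_0|^{1/2}$, use the multinomial expansion with $\sum_{|\beta|=r}r!/\beta!=n^r$, pass from $L^\infty$ to $L^2$ via $|O|^{1/2}$, and absorb $n^r|O|^{1/2}C_0$ into $(C_1r)^{2r}$; your added check that all iterates lie in $D(\Delta_\Omega)$ is a detail the paper makes explicit only later, in the proof of Proposition~\ref{prop:lower}.
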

\begin{proof}
We first show \eqref{eq:phi-lower}. Indeed, by Lemma~\ref{lem:cutoff},
applied to the sets $O_0\Subset O$ in
\eqref{eq:uncontrolled-open-set} with $N=2r$, there exists a cutoff
function
\begin{equation}\label{eq:phi-cutoff-definition}
\phi_r:=\chi_{2r}.
\end{equation}
Since $\phi_r=1$ on $O_0$, we have
\[
\norm{\phi_r}_{L^2(\Omega)}^2
\ge
\int_{O_0}|\phi_r(x)|^2\,dx
=
|O_0|.
\]
Therefore, \eqref{eq:phi-lower} holds with
$c_0:=|O_0|^{1/2}>0$.

We next show \eqref{eq:laplacian-upper}. To this end, we estimate
$\Delta_\Omega^r\phi_r$. Three observations are as follows.

First, by \eqref{eq:phi-cutoff-definition} and the properties of the
cutoff function in Lemma~\ref{lem:cutoff}, we have
$\phi_r\in C_c^\infty(O)$ with $O\Subset\Omega$. In the whole-space and
bounded-domain cases, $\Delta_\Omega$ coincides with the Euclidean
Laplacian on $O$; in the torus case, the same holds in the flat coordinate
chart containing $O$. Hence
\[
\Delta_\Omega
=
\sum_{j=1}^n\partial_{x_j}^2
\]
on $O$. Therefore, the multinomial formula gives
\begin{equation}\label{eq:laplacian-multinomial}
\Delta_\Omega^r\phi_r
=
\left(\sum_{j=1}^n\partial_{x_j}^2\right)^r\phi_r
=
\sum_{\substack{\beta\in\mathbb N^n\\|\beta|=r}}
\frac{r!}{\beta!}\,\partial^{2\beta}\phi_r,
\end{equation}
where $\beta!:=\beta_1!\cdots\beta_n!$.

Second, by Lemma~\ref{lem:cutoff} with $N=2r$ and
\eqref{eq:phi-cutoff-definition}, for every
$\beta$ with $|\beta|=r$, we have
\begin{align}
\norm{\partial^{2\beta}\phi_r}_{L^2(\Omega)}
&\le
|O|^{1/2}
\norm{\partial^{2\beta}\phi_r}_{L^\infty(\Omega)}
\notag\\
&\le
|O|^{1/2}C_0(2C_0r)^{2r}.
\label{eq:multi-derivative-bound}
\end{align}

Third, we have
\begin{equation}\label{eq:multinomial-coefficient-sum}
\sum_{\substack{\beta\in\mathbb N^n\\|\beta|=r}}
\frac{r!}{\beta!}
=
n^r.
\end{equation}

Combining \eqref{eq:laplacian-multinomial},
\eqref{eq:multi-derivative-bound}, and
\eqref{eq:multinomial-coefficient-sum}, we obtain
\begin{equation}\label{eq:laplacian-cutoff-computation}
\norm{\Delta_\Omega^r\phi_r}_{L^2(\Omega)}
\le
n^r|O|^{1/2}C_0(2C_0r)^{2r}.
\end{equation}
After increasing a constant $C_1\ge1$, depending only on $n$, $O_0$, and
$O$, the right-hand side of \eqref{eq:laplacian-cutoff-computation} is
bounded by $(C_1r)^{2r}$ for every $r\ge1$. This proves
\eqref{eq:laplacian-upper}, and completes the proof.
\end{proof}

We now establish the lower bound in \eqref{eq:main-two-sided}.

\begin{proposition}[Feedback-independent lower bound]\label{prop:lower}
Assume \eqref{eq:uncontrolled-open-set}. Then there exist constants $c>0$ and
$\delta_1>0$ such that, for every $K\in\Lcal(X)$ and every
$\delta\ge\delta_1$,
\begin{equation}\label{eq:lower-final}
C_{\rm S}(\delta,K)\ge e^{c\sqrt\delta}.
\end{equation}
In particular,
\begin{equation}\label{eq:ChatS-lower}
\ChatS(\delta)\ge e^{c\sqrt\delta},
\qquad \delta\ge\delta_1.
\end{equation}
\end{proposition}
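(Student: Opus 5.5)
Fix $K\in\Lcal(X)$ and $\delta>0$. If $C_{\rm S}(\delta,K)=+\infty$ there is nothing to prove, so assume $M:=C_{\rm S}(\delta,K)<\infty$. Then $\norm{e^{tA_K}}\le Me^{-\delta t}$ for $t\ge0$. The idea is to pass to the adjoint and test against the chain cutoffs $\phi_r$ of Lemma~\ref{lem:chain-cutoff}, which live in the hole where the feedback is invisible.

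First, the adjoint semigroup $e^{tA_K^*}=(e^{tA_K})^*$ has the same norm bound. Since $BK$ is bounded, $A_K^*=A^*+K^*B^*=-i\Delta_\Omega+K^*B$ with $D(A_K^*)=D(A)$. Lemma~\ref{lem:negative-power}, applied with $G=A_K^*$, gives that $A_K^*$ is invertible and $\norm{(A_K^*)^{-r}}\le M\delta^{-r}$ for every integer $r\ge1$.

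Next, take $\phi_r\in C_c^\infty(O)$ from Lemma~\ref{lem:chain-cutoff}. Every iterate $\Delta_\Omega^j\phi_r$ is smooth, supported in $O$ and lies in $D(\Delta_\Omega)$; in the bounded-domain case the Dirichlet condition holds because the support is compact in $\Omega$. Since $O\cap\omega=\emptyset$, we have $B\Delta_\Omega^j\phi_r=\one_\omega\Delta_\Omega^j\phi_r=0$, so $K^*B$ kills each iterate. Induction on $j$ then gives $\phi_r\in D((A_K^*)^r)$ and
\[
(A_K^*)^r\phi_r=(-i)^r\Delta_\Omega^r\phi_r .
\]
Writing $\phi_r=(A_K^*)^{-r}(A_K^*)^r\phi_r$, we get
\[
c_0\le\norm{\phi_r}\le M\delta^{-r}\norm{\Delta_\Omega^r\phi_r}\le M\delta^{-r}(C_1r)^{2r},
\]
that is,
\[
M\ge c_0\bigl(\delta/(C_1^2r^2)\bigr)^r .
\]

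Finally, choose $r:=\lfloor \sqrt\delta/(eC_1)\rfloor$, which is at least $1$ once $\delta\ge\delta_1$. Then $\delta/(C_1^2r^2)\ge e^2$, so $M\ge c_0e^{2r}\ge c_0e^{2\sqrt\delta/(eC_1)-2}$. Enlarging $\delta_1$ lets us absorb $c_0e^{-2}$ and obtain $M\ge e^{c\sqrt\delta}$ with, say, $c=1/(eC_1)$. The constants depend only on $O_0$, $O$ and $n$, so the bound is uniform in $K$. Taking the infimum over $K$ yields \eqref{eq:ChatS-lower}.

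The main obstacle is the domain bookkeeping in the middle step: checking that $\phi_r$ really lies in $D((A_K^*)^r)$ and that the feedback term vanishes at every stage, not only the first. This rests on the adjoint formula $A_K^*=-A+K^*B$ with unchanged domain, which is standard for bounded perturbations, and on the locality of $\Delta_\Omega$ near $O$. The rest is the optimization of $r$.
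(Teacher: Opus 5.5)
Your proposal is correct and follows essentially the same route as the paper. Both arguments pass to the adjoint generator $A_K^*=-i\Delta_\Omega+K^*B$, use the hole condition to get the feedback-invariant chain $(A_K^*)^r\phi_r=(-i)^r\Delta_\Omega^r\phi_r$, invert it with the negative-power bound $\norm{(A_K^*)^{-r}}\le M\delta^{-r}$, and then take $r\sim\sqrt\delta$. The only difference is cosmetic: you choose $r=\lfloor\sqrt\delta/(eC_1)\rfloor$ so that the ratio is at least $e^2$, whereas the paper chooses $r=\lfloor\varepsilon\sqrt\delta\rfloor$ with $C_1^2\varepsilon^2<1/4$ so that the ratio exceeds $4$.
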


\begin{proof}
Recall from  \eqref{eq:free-operators} that
\[
X=L^2(\Omega;\C),\qquad
A=i\Delta_\Omega,\qquad
D(A)=D(\Delta_\Omega),\qquad
B=\one_\omega.
\]
Fix $K\in\Lcal(X)$. If
$C_{\rm S}(\delta,K)=+\infty$, then \eqref{eq:lower-final} is immediate.
We therefore assume that
\begin{equation}\label{eq:CS-finite}
C_{\rm S}(\delta,K)<+\infty.
\end{equation}

It follows from \eqref{eq:free-operator-properties} and
\eqref{eq:closed-loop-generator} that
\begin{equation}\label{eq:adjoint-closed-loop}
A_K^*=-i\Delta_\Omega+K^*B,
\qquad D(A_K^*)=D(A).
\end{equation}
We will use this identity to show that, on the chain
$\{\Delta_\Omega^j\phi_r\}_{j\ge0}$, the feedback term in the adjoint
closed-loop operator vanishes, so that it acts as the free Schrödinger
operator.

Fix an integer $r\ge1$ and let $\phi_r$ be given by
Lemma~\ref{lem:chain-cutoff}. We first claim
\begin{equation}\label{eq:feedback-invariant-chain}
(A_K^*)^r\phi_r=(-i)^r\Delta_\Omega^r\phi_r.
\end{equation}

Indeed, since the Laplacian is local and
$\operatorname{supp}\phi_r\Subset O$,
 we have
\begin{equation}\label{eq:chain-support}
\operatorname{supp}(\Delta_\Omega^j\phi_r)
\subset\operatorname{supp}\phi_r\Subset O,
\qquad j\ge0.
\end{equation}
Since \eqref{eq:uncontrolled-open-set} gives
$\overline O\cap\omega=\emptyset$, it follows from \eqref{eq:chain-support} and
$B=\one_\omega$ that
\begin{equation}\label{eq:B-chain-zero}
B\Delta_\Omega^j\phi_r=0,
\qquad j=0,1,\ldots,r-1.
\end{equation}
Meanwhile, by \eqref{eq:chain-support} and
\eqref{eq:adjoint-closed-loop}, we have
\begin{equation}\label{eq:chain-domain}
\Delta_\Omega^j\phi_r\in C_c^\infty(O)\subset D(A_K^*),
\qquad j\ge0.
\end{equation}
Combining
\eqref{eq:adjoint-closed-loop}, \eqref{eq:B-chain-zero}, and
\eqref{eq:chain-domain}, we obtain
\begin{equation}\label{eq:one-step-chain}
A_K^*\Delta_\Omega^j\phi_r
=-i\Delta_\Omega^{j+1}\phi_r,
\qquad j=0,1,\ldots,r-1.
\end{equation}
Applying \eqref{eq:one-step-chain} successively gives  \eqref{eq:feedback-invariant-chain}.

We next claim 
\begin{equation}\label{eq:phi-negative-power}
\norm{\phi_r}_{L^2(\Omega)}
\le C_{\rm S}(\delta,K)\delta^{-r}
\norm{\Delta_\Omega^r\phi_r}_{L^2(\Omega)}.
\end{equation}

In fact,  it follows from \eqref{eq:CSdeltaK} and \eqref{eq:CS-finite} that
\[
\norm{e^{tA_K}}_{\Lcal(X)}
\le C_{\rm S}(\delta,K)e^{-\delta t},
\qquad t\ge0.
\]
This, along with 
\[
e^{tA_K^*}=\bigl(e^{tA_K}\bigr)^*,
\qquad t\ge0,
\]
 yields
\begin{equation}\label{eq:adjoint-decay}
\norm{e^{tA_K^*}}_{\Lcal(X)}
\le C_{\rm S}(\delta,K)e^{-\delta t},
\qquad t\ge0.
\end{equation}
Applying Lemma~\ref{lem:negative-power} to $G=A_K^*$ with
$M=C_{\rm S}(\delta,K)$ and using \eqref{eq:adjoint-decay}, we obtain
\begin{equation}\label{eq:adjoint-negative-power-bound}
\norm{(A_K^*)^{-r}}_{\Lcal(X)}
\le C_{\rm S}(\delta,K)\delta^{-r}.
\end{equation}
Meanwhile, applying $(A_K^*)^{-r}$ to \eqref{eq:feedback-invariant-chain} gives
\begin{equation}\label{eq:phi-negative-power-identity}
\phi_r=(-i)^r(A_K^*)^{-r}\Delta_\Omega^r\phi_r.
\end{equation}
Taking norms in \eqref{eq:phi-negative-power-identity} and using
\eqref{eq:adjoint-negative-power-bound}, we obtain \eqref{eq:phi-negative-power}.

Now \eqref{eq:phi-lower}, \eqref{eq:laplacian-upper}, and
\eqref{eq:phi-negative-power} imply
\begin{equation}\label{eq:pre-optimize-r}
C_{\rm S}(\delta,K)
\ge c_0\left(\frac{\delta}{C_1^2r^2}\right)^r.
\end{equation}
Choose $\varepsilon>0$ such that
\begin{equation}\label{eq:epsilon-choice}
C_1^2\varepsilon^2<\frac14,
\end{equation}
and set
\begin{equation}
r:=\lfloor\varepsilon\sqrt\delta\rfloor.
\end{equation}
If $\varepsilon\sqrt\delta\ge2$, then
\begin{equation}\label{eq:r-bounds}
\frac{\varepsilon}{2}\sqrt\delta
\le r\le\varepsilon\sqrt\delta.
\end{equation}
The upper bound in \eqref{eq:r-bounds} and \eqref{eq:epsilon-choice} give
\begin{equation}\label{eq:ratio-lower-four}
\frac{\delta}{C_1^2r^2}
\ge\frac1{C_1^2\varepsilon^2}>4.
\end{equation}
Combining \eqref{eq:pre-optimize-r}, \eqref{eq:ratio-lower-four}, and the
lower bound in \eqref{eq:r-bounds}, we obtain
\begin{equation}
C_{\rm S}(\delta,K)
\ge c_0\exp\!\left(
\frac{\varepsilon\log4}{2}\sqrt\delta
\right).
\end{equation}
Since $c_0>0$ is independent of $K$ and $\delta$, after increasing the lower
threshold we can absorb $c_0$ into the exponential. Thus there exist
$c>0$ and $\delta_1>0$, independent of $K$, such that
\eqref{eq:lower-final} holds for $\delta\ge\delta_1$. Taking the infimum over
$K\in\Lcal(X)$ in \eqref{eq:lower-final} and using \eqref{eq:ChatS} gives
\eqref{eq:ChatS-lower}.
\end{proof}

\begin{remark}\label{remk-2.5}
The idea of constructing a feedback-invariant chain is inspired by the
finite-dimensional argument of Quan and Wang~\cite[Proposition~1]{QuanWang2026}.
In their setting, such a chain is generated by the Brunovsk\'y canonical
structure of the controllable pair $(A,B)$. In contrast, for the
Schr\"odinger equation considered here, there is no intrinsic
finite-dimensional controllability chain. The role of the chain is instead
played by functions localized in the uncontrolled region, whose construction
relies on the hole condition (H1) and the quantitative cutoff estimate in
Lemma~\ref{lem:chain-cutoff}. Thus, the underlying idea is related to the
finite-dimensional case, but the mechanism is genuinely different and
geometric in nature.
\end{remark}

\subsection{The extended Gramian and the upper bound}
\label{sec:upper}

The purpose of this subsection is to prove the upper bound in
Theorem~\ref{thm:main}. In contrast with the lower-bound argument, where
the uncontrolled region prevents any feedback from improving the
stabilization cost, the upper bound is obtained by constructing a suitable
feedback operator. The key ingredient is the extended Gramian associated
with the Schr\"odinger group. Under the quantitative short-time
observability assumption {\rm(H2)}, the observability estimate provides
quantitative coercivity of this Gramian, which allows us to construct a
bounded feedback operator and derive an exponential decay estimate with an
explicit stabilization prefactor.

\subsubsection{The extended Gramian in an abstract setting}
\label{sec:abstract-gramian}

The following two propositions are standard consequences of the
Lyapunov--Gramian theory and bounded similarity transformations. The second
provides the feedback construction and similarity estimate used in the
quantitative upper bound. The first is used in the proof of the second.
We state and prove both results in the present skew-adjoint setting for
completeness.

\begin{proposition}
\label{prop:extended-gramian}
Let $A$ be skew-adjoint on a Hilbert space $X$, let $Y$ be a Hilbert space,
let $B\in\Lcal(Y,X)$, and let $U(t)=e^{tA}$. For $\lambda>0$, define
$N_\lambda\in\Lcal(X)$ by
\begin{equation}\label{eq:extended-gramian}
N_\lambda x
:=\int_0^\infty e^{-\lambda t}
U(-t)BB^*U(t)x\,dt,
\qquad x\in X,
\end{equation}
where the integral is understood as an $X$-valued Bochner integral.
Then $N_\lambda$ is bounded, self-adjoint, and nonnegative. Moreover,
\begin{equation}\label{2.28-9-19}
N_\lambda D(A)\subset D(A),
\end{equation}
and
\begin{equation}\label{eq:lyapunov-identity}
AN_\lambda x-N_\lambda Ax+\lambda N_\lambda x=BB^*x,
\qquad x\in D(A).
\end{equation}
\end{proposition}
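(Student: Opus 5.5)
The plan is to establish the four claims in order: well-definedness and boundedness, then self-adjointness and nonnegativity, then domain invariance, and finally the Lyapunov identity. The Lyapunov identity is obtained by differentiating the integrand along the group and integrating by parts.

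\emph{Boundedness.} For $x\in X$, the map $t\mapsto e^{-\lambda t}U(-t)BB^*U(t)x$ is strongly continuous, hence strongly measurable. Its norm is at most $e^{-\lambda t}\norm{B}^2\norm{x}$, since $\norm{U(\pm t)}=1$ by skew-adjointness. The Bochner integral therefore converges, and $\norm{N_\lambda}\le\norm{B}^2/\lambda$. Linearity is clear. For $x,y\in X$, pass the inner product through the integral and use $U(-t)=U(t)^*$:
\[
\ip{N_\lambda x}{y}=\int_0^\infty e^{-\lambda t}\ip{B^*U(t)x}{B^*U(t)y}_Y\,dt .
\]
This form is Hermitian, which gives self-adjointness, and taking $y=x$ gives $\ip{N_\lambda x}{x}=\int_0^\infty e^{-\lambda t}\norm{B^*U(t)x}_Y^2\,dt\ge0$.

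\emph{Domain invariance and the identity.} Rather than differentiating $N_\lambda x$ directly, I would use the weak characterization of $D(A)$. Since $A$ is skew-adjoint, $A=-A^*$, so $z\in D(A)$ exactly when $y\mapsto\ip{z}{Ay}$ is bounded on $D(A)$; in that case $\ip{Az}{y}=-\ip{z}{Ay}$. Fix $x,y\in D(A)$ and set
\[
f(t):=e^{-\lambda t}\ip{B^*U(t)x}{B^*U(t)y}_Y .
\]
Because $U(t)x$ and $U(t)y$ are $C^1$ with derivatives $U(t)Ax$ and $U(t)Ay$, and $B^*$ is bounded, $f$ is $C^1$ with
\[
f'(t)=-\lambda f(t)+e^{-\lambda t}\bigl(\ip{B^*U(t)Ax}{B^*U(t)y}+\ip{B^*U(t)x}{B^*U(t)Ay}\bigr).
\]
Integrating from $0$ to $\infty$, using $f(t)\to0$ exponentially and $f(0)=\ip{BB^*x}{y}$, gives
\[
-\ip{BB^*x}{y}=-\lambda\ip{N_\lambda x}{y}+\ip{N_\lambda Ax}{y}+\ip{N_\lambda x}{Ay}.
\]
Rearranged,
\[
\ip{N_\lambda x}{Ay}=\ip{\lambda N_\lambda x-N_\lambda Ax-BB^*x}{y},\qquad y\in D(A).
\]
The right-hand side is bounded in $y$, so $N_\lambda x\in D(A^*)=D(A)$, which is \eqref{2.28-9-19}. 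Moreover, $\ip{AN_\lambda x}{y}=-\ip{N_\lambda x}{Ay}$ for all $y$ in the dense set $D(A)$, so
\[
AN_\lambda x=-\lambda N_\lambda x+N_\lambda Ax+BB^*x,
\]
which is \eqref{eq:lyapunov-identity}.

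\emph{Main obstacle.} The delicate step is justifying $N_\lambda x\in D(A)$. The weak/adjoint argument above handles it cleanly, because it never requires differentiating a Bochner integral in the graph norm. The only other points to check are routine: the product rule for $f$, which uses strong differentiability of $U(t)x$ for $x\in D(A)$ together with boundedness of $B^*$, and the vanishing of the boundary term at infinity, which follows from the factor $e^{-\lambda t}$ and $\norm{U(t)}=1$.
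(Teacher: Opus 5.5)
Your proposal is correct and follows essentially the same route as the paper. It uses the same quadratic-form representation for self-adjointness and nonnegativity, and the same differentiation of $e^{-\lambda t}\langle B^*U(t)x,B^*U(t)y\rangle_Y$ integrated over $(0,\infty)$ to obtain the weak Lyapunov identity. It then concludes $N_\lambda D(A)\subset D(A)$ and the Lyapunov identity by the same adjoint-domain characterization plus density. The only cosmetic difference is in boundedness: the paper shows that truncated Gramians converge in operator norm, whereas you argue directly from Bochner integrability with the bound $\|N_\lambda\|\le\|B\|^2/\lambda$.
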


\begin{proof}
Fix $\lambda>0$.
We first prove that $N_\lambda$ is well defined as an element of
$\Lcal(X)$. Indeed, for $R>0$, define the truncated Gramian
\[
N_{\lambda,R}x
:=
\int_0^R e^{-\lambda t}U(-t)BB^*U(t)x\,dt,
\qquad x\in X .
\]
Since $U(t)$ is unitary, we have
\[
\bigl\|e^{-\lambda t}U(-t)BB^*U(t)\bigr\|_{\mathcal L(X)}
\le e^{-\lambda t}\|B\|^2,
\qquad t\ge0 .
\]
Therefore, for $R_2>R_1>0$,
\begin{equation}\label{eq:truncated-gramian-Cauchy}
\|N_{\lambda,R_2}-N_{\lambda,R_1}\|_{\mathcal L(X)}
\le
\|B\|^2\int_{R_1}^{R_2}e^{-\lambda t}\,dt .
\end{equation}
Hence $N_{\lambda,R}$ converges in operator norm as
$R\to+\infty$, and its limit is the bounded operator $N_\lambda$
defined by \eqref{eq:extended-gramian}.

We next prove \eqref{2.28-9-19}. Indeed, by \eqref{eq:extended-gramian},
for every $x\in X$,
\begin{equation}\label{eq:gramian-quadratic}
\langle N_\lambda x,x\rangle
=
\int_0^\infty e^{-\lambda t}
\|B^*U(t)x\|_Y^2\,dt\ge0 .
\end{equation}
Meanwhile, the operator $U(-t)BB^*U(t)$ is self-adjoint for every
$t\ge0$. Thus $N_\lambda$ is self-adjoint. Together with
\eqref{eq:gramian-quadratic}, $N_\lambda$ is self-adjoint and nonnegative.

Fix $x,y\in D(A)$ and define
\begin{equation}\label{eq:g-definition}
g(t):=
\langle B^*U(t)x,B^*U(t)y\rangle_Y,\qquad t\ge0 .
\end{equation}
Since $x,y\in D(A)$, the group invariance
$U(t)D(A)=D(A)$ for $t\ge0$ and the identity
$AU(t)=U(t)A$ on $D(A)$ for $t\ge0$ imply that
\[
\frac{d}{dt}U(t)x=U(t)Ax,\qquad
\frac{d}{dt}U(t)y=U(t)Ay,\qquad t\ge0 .
\]
Using that $B^*$ is bounded, we obtain from \eqref{eq:g-definition} that
\begin{equation}\label{eq:g-derivative}
g'(t)
=
\langle B^*U(t)Ax,B^*U(t)y\rangle_Y
+
\langle B^*U(t)x,B^*U(t)Ay\rangle_Y,\qquad t\ge0 .
\end{equation}
Meanwhile, since the function $g$ is bounded, we have
\[
e^{-\lambda t}g(t)\to0
\qquad\text{as }t\to+\infty .
\]
Integrating $(e^{-\lambda t}g(t))'$ over $(0,\infty)$ gives
\begin{equation}\label{eq:weighted-g-integration}
-g(0)
=
\int_0^\infty e^{-\lambda t}
\bigl(g'(t)-\lambda g(t)\bigr)\,dt .
\end{equation}
Using \eqref{eq:g-definition} (with $t=0$),
\eqref{eq:g-derivative}, \eqref{eq:weighted-g-integration},
and \eqref{eq:extended-gramian}, we obtain
\begin{equation}\label{eq:weak-lyapunov}
\langle BB^*x,y\rangle_X
=
-\langle N_\lambda Ax,y\rangle_X
-\langle N_\lambda x,Ay\rangle_X
+\lambda\langle N_\lambda x,y\rangle_X .
\end{equation}
Since $x\in D(A)$, the above identity implies
\[
\langle N_\lambda x,Ay\rangle_X
=
-\langle BB^*x,y\rangle_X
-\langle N_\lambda Ax,y\rangle_X
+\lambda\langle N_\lambda x,y\rangle_X,
\qquad y\in D(A).
\]
The right-hand side can be written as $\langle z,y\rangle_X$ for some
$z\in X$. Hence, by the characterization of the domain of the adjoint
operator,
\[
N_\lambda x\in D(A^*)=D(A).
\]
Since $x\in D(A)$ is arbitrary, we obtain \eqref{2.28-9-19}.

Finally, we prove \eqref{eq:lyapunov-identity}. Since $A^*=-A$,
\[
-\langle N_\lambda x,Ay\rangle_X
=
\langle AN_\lambda x,y\rangle_X .
\]
Substituting this relation into \eqref{eq:weak-lyapunov} yields
\[
\langle
AN_\lambda x-N_\lambda Ax+\lambda N_\lambda x-BB^*x,
y
\rangle_X=0,
\qquad y\in D(A).
\]
Since $D(A)$ is dense in $X$, we obtain
\eqref{eq:lyapunov-identity}. This completes the proof.
\end{proof}

\begin{proposition}
\label{prop:gramian-feedback}
Under the assumptions of Proposition~\ref{prop:extended-gramian},
let
$\lambda>0$ and
 assume
that $N_\lambda$ is boundedly invertible. Define
\[
K_\lambda:=-B^*N_\lambda^{-1}\in\Lcal(X,Y).
\]
Then the closed-loop generator
\begin{equation}\label{eq:abstract-closed-loop}
L_\lambda:=A+BK_\lambda
=A-BB^*N_\lambda^{-1},
\qquad D(L_\lambda)=D(A),
\end{equation}
satisfies
\begin{equation}\label{eq:similarity-semigroup}
e^{tL_\lambda}
=
e^{-\lambda t}N_\lambda U(t)N_\lambda^{-1},
\qquad t\ge0 .
\end{equation}
Moreover
\begin{equation}\label{eq:prefactor-condition-number}
\sup_{t\ge0}e^{\lambda t}
\norm{e^{tL_\lambda}}_{\Lcal(X)}
\le
\norm{N_\lambda}_{\Lcal(X)}
\norm{N_\lambda^{-1}}_{\Lcal(X)} .
\end{equation}
\end{proposition}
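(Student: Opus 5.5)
The plan is to show that $L_\lambda$ is similar, via $N_\lambda$, to the shifted skew-adjoint generator $A-\lambda I$. The similarity formula \eqref{eq:similarity-semigroup} follows at once, and the prefactor bound is then a one-line consequence of unitarity. The algebraic input is the Lyapunov identity \eqref{eq:lyapunov-identity}, which we rewrite as an intertwining relation.

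\textbf{Step 1: the intertwining relation.} For $x\in D(A)$, \eqref{eq:lyapunov-identity} gives
\[
AN_\lambda x-BB^*x=N_\lambda(A-\lambda)x .
\]
Now take $y\in D(A)$ and set $x:=N_\lambda^{-1}y$. The main obstacle is checking that $x\in D(A)$, i.e. $N_\lambda^{-1}D(A)\subset D(A)$. Proposition~\ref{prop:extended-gramian} only gives the inclusion $N_\lambda D(A)\subset D(A)$.

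\textbf{Step 2: handling the domain issue.} I would argue as follows. Define the operator
\[
M:=N_\lambda(A-\lambda)N_\lambda^{-1}, \qquad D(M):=N_\lambda D(A).
\]
This is the generator of the $C_0$-semigroup $S(t):=e^{-\lambda t}N_\lambda U(t)N_\lambda^{-1}$. By Step 1, $L_\lambda$ and $M$ agree on $N_\lambda D(A)$, and this set is contained in $D(A)=D(L_\lambda)$.

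Both $L_\lambda$ and $M$ are generators: $L_\lambda$ is a bounded perturbation of the generator $A$. For $\mu$ large, $\mu-M$ is bijective from $D(M)$ onto $X$, and $\mu-L_\lambda$ is injective on $D(A)$. Since $L_\lambda\supset M$ and $\mu-M$ is already surjective, it follows that $D(A)=D(M)$ and hence $L_\lambda=M$. Uniqueness of the semigroup generated by a given generator then yields \eqref{eq:similarity-semigroup}.

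If one prefers to avoid this, there is an alternative. For $x\in D(A)$, differentiate
\[
t\mapsto e^{-\lambda t}N_\lambda U(t)N_\lambda^{-1}\ \text{applied to}\ N_\lambda x ,
\]
and use the variation-of-constants formula for the bounded perturbation $BK_\lambda$ to identify the two semigroups on the dense set $N_\lambda D(A)$. Density holds because $N_\lambda$ is an isomorphism and $D(A)$ is dense.

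\textbf{Step 3: the prefactor bound.} Since $U(t)$ is unitary,
\[
\norm{e^{tL_\lambda}}\le e^{-\lambda t}\norm{N_\lambda}\,\norm{N_\lambda^{-1}} .
\]
Multiplying by $e^{\lambda t}$ and taking the supremum over $t\ge0$ gives \eqref{eq:prefactor-condition-number}.
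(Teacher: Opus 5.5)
Your proposal is correct and is essentially the paper's proof. The paper also defines $\widetilde L_\lambda:=N_\lambda AN_\lambda^{-1}-\lambda I$ on $N_\lambda D(A)$ and uses the Lyapunov identity with $N_\lambda D(A)\subset D(A)$ to get $\widetilde L_\lambda\subset L_\lambda$. It then closes the domain gap by combining surjectivity of $\mu-\widetilde L_\lambda$ with injectivity of $\mu-L_\lambda$, and reads off the prefactor bound from unitarity of $U(t)$. As your Step 2 shows, the inclusion $N_\lambda^{-1}D(A)\subset D(A)$ that you flagged in Step 1 is never needed directly; it comes out as a byproduct of $D(L_\lambda)=N_\lambda D(A)$.
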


\begin{proof}
Fix $\lambda>0$.
Define
\[
\widetilde L_\lambda
:=
N_\lambda A N_\lambda^{-1}-\lambda I,
\qquad
D(\widetilde L_\lambda):=N_\lambda D(A).
\]
Since $N_\lambda$ is boundedly invertible, 
$N_\lambda A N_\lambda^{-1}$ is a bounded similarity transform of $A$.
Therefore it generates the group
$N_\lambda U(t)N_\lambda^{-1}$. Hence, by the shift
$-\lambda I$, the operator $\widetilde L_\lambda$ generates the group
\begin{equation}\label{eq:similar-group}
\widetilde S_\lambda(t)
=
e^{-\lambda t}N_\lambda U(t)N_\lambda^{-1},
\qquad t\in\mathbb R .
\end{equation}

Let $x=N_\lambda y$ with $y\in D(A)$. By the definition of
$\widetilde L_\lambda$ and the Lyapunov identity
\eqref{eq:lyapunov-identity}, we have
\begin{align}
\widetilde L_\lambda x
&=
N_\lambda Ay-\lambda N_\lambda y \notag\\
&=
AN_\lambda y-BB^*y \notag\\
&=
(A-BB^*N_\lambda^{-1})x .
\label{eq:similar-generator-action}
\end{align}
Since Proposition~\ref{prop:extended-gramian} gives
$N_\lambda D(A)\subset D(A)$, we have from \eqref{eq:abstract-closed-loop} and \eqref{eq:similar-generator-action} that
\[
D(\widetilde L_\lambda)=N_\lambda D(A)\subset D(A)
=D(L_\lambda),
\]
and therefore
\begin{equation}\label{eq:generator-extension}
\widetilde L_\lambda\subset L_\lambda .
\end{equation}

The operator $L_\lambda$ is a bounded perturbation of the skew-adjoint
operator $A$, and hence it is also a generator. Since
$\widetilde L_\lambda$ and $L_\lambda$ are generators, we may choose
$\mu>0$ sufficiently large such that
\[
\mu\in\rho(\widetilde L_\lambda)\cap\rho(L_\lambda).
\]
Let $x\in D(L_\lambda)$ and set
\[
h:=(\mu-L_\lambda)x .
\]
Since $\mu\in\rho(\widetilde L_\lambda)$, the operator
$\mu-\widetilde L_\lambda$ is surjective. Hence there exists
$z\in D(\widetilde L_\lambda)$ such that
\[
(\mu-\widetilde L_\lambda)z=h .
\]
Using \eqref{eq:generator-extension}, we obtain
\[
(\mu-L_\lambda)z
=
(\mu-L_\lambda)x .
\]
Since $\mu\in\rho(L_\lambda)$, the operator
$\mu-L_\lambda$ is injective. Therefore,
\[
z=x .
\]
Because $z\in D(\widetilde L_\lambda)$, we conclude that
\[
x\in D(\widetilde L_\lambda).
\]
Since $x\in D(L_\lambda)$ was arbitrary,
\[
D(L_\lambda)\subset D(\widetilde L_\lambda).
\]
Together with \eqref{eq:generator-extension}, this yields
\[
\widetilde L_\lambda=L_\lambda .
\]

The equality of the generators implies that their generated groups
coincide. Therefore, using \eqref{eq:similar-group}, we obtain
\begin{equation}\label{eq:similarity-semigroup}
e^{tL_\lambda}
=
e^{-\lambda t}N_\lambda U(t)N_\lambda^{-1},
\qquad t\ge0 .
\end{equation}

Finally, since $U(t)$ is unitary, \eqref{eq:similarity-semigroup} gives
\[
e^{\lambda t}\norm{e^{tL_\lambda}}_{\Lcal(X)}
=
\norm{N_\lambda U(t)N_\lambda^{-1}}_{\Lcal(X)}
\le
\norm{N_\lambda}_{\Lcal(X)}
\norm{N_\lambda^{-1}}_{\Lcal(X)},
\qquad t\ge0 .
\]
Taking the supremum over $t\ge0$ gives
\eqref{eq:prefactor-condition-number}. This completes the proof.
\end{proof}

\subsubsection{Quantitative upper bound for the Schr\"odinger equation}
\label{sec:schrodinger-upper}

We now return to the proof of the upper bound in
Theorem~\ref{thm:main}. To apply Proposition~\ref{prop:gramian-feedback}
to the Schr\"odinger system, we first record the following lemma.

\begin{lemma}
\label{lem:gramian-coercivity}
Assume \eqref{eq:observability-inequality}. Let $N_\lambda$ be defined by
\eqref{eq:extended-gramian} with $Y=X$ and $B=\one_\omega$. Then, for every
$\lambda>0$, there exists $c_\lambda>0$ such that
\begin{equation}\label{eq:gramian-coercivity}
N_\lambda\ge c_\lambda I .
\end{equation}
In particular, $N_\lambda$ is boundedly invertible.
\end{lemma}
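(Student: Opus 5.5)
We compare the quadratic form of $N_\lambda$ with the observability integral on a fixed finite time interval. By \eqref{eq:gramian-quadratic} with $Y=X$ and $B^*=B=\one_\omega$, we have, for every $x\in X$,
\[
\langle N_\lambda x,x\rangle
=\int_0^\infty e^{-\lambda t}\norm{\one_\omega U(t)x}_X^2\,dt .
\]
Fix any $T>0$; for instance $T=1$, or $T\le T_0$ if an explicit constant is wanted later. Since the integrand is nonnegative, we may discard the tail $t>T$. On $[0,T]$ we bound the weight below by $e^{-\lambda t}\ge e^{-\lambda T}$. This gives
\[
\langle N_\lambda x,x\rangle
\ge e^{-\lambda T}\int_0^T\norm{BU(t)x}_X^2\,dt
\ge \frac{e^{-\lambda T}}{C(T)}\norm{x}_X^2,
\]
where the last step is the observability inequality \eqref{eq:observability-inequality}. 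So \eqref{eq:gramian-coercivity} holds with $c_\lambda:=e^{-\lambda T}/C(T)>0$. Under \eqref{eq:observability-growth} one may also optimize over $T$, but that is not needed here.

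For invertibility, we use that $N_\lambda$ is bounded, self-adjoint and nonnegative by Proposition~\ref{prop:extended-gramian}. The bound $N_\lambda\ge c_\lambda I$ and Cauchy--Schwarz give $\norm{N_\lambda x}_X\ge c_\lambda\norm{x}_X$. Hence $N_\lambda$ is injective and has closed range. Its range is also dense, because $(\operatorname{Ran}N_\lambda)^\perp=\ker N_\lambda^*=\ker N_\lambda=\{0\}$. Therefore $N_\lambda$ is bijective, and $\norm{N_\lambda^{-1}}\le c_\lambda^{-1}$. Equivalently, one can apply Lax--Milgram to the coercive form $\langle N_\lambda\cdot,\cdot\rangle$.

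There is no real obstacle in this argument. The only points to check are that \eqref{eq:gramian-quadratic} is available for all $x\in X$, which follows from the Bochner integral definition, and that the weight is bounded below on $[0,T]$. The explicit constant $c_\lambda=e^{-\lambda T}/C(T)$ is the form that will feed into \eqref{eq:prefactor-condition-number}. Together with $\norm{N_\lambda}\le 1/\lambda$, it should give a prefactor bound like $\lambda^{-1}C(T)e^{\lambda T}$. Choosing $T\asymp\lambda^{-1/2}$ in that bound is what yields the $e^{C\sqrt\delta}$ upper bound.
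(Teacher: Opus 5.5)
Your proposal is correct and follows the paper's proof: you drop the tail $t>T$, bound $e^{-\lambda t}\ge e^{-\lambda T}$ on $[0,T]$, and apply the observability inequality to get $c_\lambda=e^{-\lambda T}/C(T)$. Your explicit justification of bounded invertibility fills in a step the paper only asserts; that justification combines bounded-belowness, closed range, and dense range via $\ker N_\lambda^*=\ker N_\lambda=\{0\}$.
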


\begin{proof}
Fix $\lambda>0$. By \eqref{eq:gramian-quadratic}
 and
\eqref{eq:free-operator-properties},
\[
\langle N_\lambda x,x\rangle
=
\int_0^\infty e^{-\lambda t}
\|BU(t)x\|_X^2\,dt,
\qquad x\in X.
\]
Fix $T>0$. Since $e^{-\lambda t}\ge e^{-\lambda T}$ for
$0\le t\le T$, it follows from the above identity and the observability
inequality \eqref{eq:observability-inequality} that
\[
\langle N_\lambda x,x\rangle
\ge
e^{-\lambda T}
\int_0^T\|BU(t)x\|_X^2\,dt
\ge
\frac{e^{-\lambda T}}{C(T)}\|x\|_X^2 .
\]
Hence \eqref{eq:gramian-coercivity} holds with
$c_\lambda=e^{-\lambda T}/C(T)>0$, and therefore $N_\lambda$ is boundedly
invertible. This completes the proof.
\end{proof}

We now use Lemma~\ref{lem:gramian-coercivity} and
Proposition~\ref{prop:gramian-feedback} to prove the following upper bound.

\begin{proposition}
\label{prop:upper}
Assume \eqref{eq:observability-inequality}--\eqref{eq:observability-growth}.
Then there exist constants $C_+>0$ and $\delta_2>0$ such that
\begin{equation}\label{eq:ChatS-upper}
\ChatS(\delta)\le e^{C_+\sqrt\delta},
\qquad \delta\ge\delta_2.
\end{equation}
\end{proposition}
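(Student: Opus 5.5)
We take $\lambda=\delta$ in Proposition~\ref{prop:gramian-feedback} with $Y=X$, $B=\one_\omega$. Lemma~\ref{lem:gramian-coercivity} makes $N_\delta$ boundedly invertible. The feedback $K_\delta:=-BN_\delta^{-1}\in\Lcal(X)$ then satisfies, by \eqref{eq:prefactor-condition-number},
\[
C_{\rm S}(\delta,K_\delta)\le \norm{N_\delta}\,\norm{N_\delta^{-1}}.
\]
Since $\ChatS(\delta)\le C_{\rm S}(\delta,K_\delta)$, it suffices to bound the condition number of $N_\delta$ by $e^{C_+\sqrt\delta}$ for large $\delta$.

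The upper estimate is immediate. Because $U(t)$ is unitary and $\norm{B}=1$, \eqref{eq:gramian-quadratic} gives $\ip{N_\delta x}{x}\le\int_0^\infty e^{-\delta t}\,dt\,\norm{x}^2$. Hence $\norm{N_\delta}\le 1/\delta$.

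For the inverse we use the proof of Lemma~\ref{lem:gramian-coercivity}. For any $0<T\le T_0$ it gives $N_\delta\ge e^{-\delta T}C(T)^{-1}I$, and therefore
\[
\norm{N_\delta^{-1}}\le e^{\delta T}C(T)\le C_0\exp\!\left(\delta T+\frac{C_0}{T}\right),
\]
by \eqref{eq:observability-growth}. Combining the two estimates,
\[
C_{\rm S}(\delta,K_\delta)\le \frac{C_0}{\delta}\exp\!\left(\delta T+\frac{C_0}{T}\right).
\]

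The only real point is to balance the two exponents. We choose $T=\delta^{-1/2}$, which is admissible once $\delta\ge T_0^{-2}$. The exponent then becomes $(1+C_0)\sqrt\delta$. The factor $C_0/\delta$ is at most $1$ for $\delta\ge C_0$, or in any case it can be absorbed by slightly enlarging the constant. This gives $\ChatS(\delta)\le e^{C_+\sqrt\delta}$ with $C_+=1+C_0$ and $\delta_2:=\max\{T_0^{-2},C_0,1\}$. The optimal choice $T=\sqrt{C_0/\delta}$ would give the sharper constant $2\sqrt{C_0}$, but it is not needed.

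No step is a genuine obstacle. The only things to check are that Proposition~\ref{prop:gramian-feedback} applies, which holds because $A=i\Delta_\Omega$ is skew-adjoint, and that the coercivity constant from Lemma~\ref{lem:gramian-coercivity} may be taken with $T$ depending on $\delta$. The latter is true because its proof works for each fixed $T>0$ separately.
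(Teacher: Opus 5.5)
Your proposal is correct and follows the paper's proof essentially step for step. Both take the Gramian $N_\delta$ with $\lambda=\delta$, use $\norm{N_\delta}\le 1/\delta$ and $\norm{N_\delta^{-1}}\le C(T)e^{\delta T}$ together with Proposition~\ref{prop:gramian-feedback}, and choose $T\asymp\delta^{-1/2}$. The only difference is that you take $T=\delta^{-1/2}$ rather than the paper's balanced choice $T=\sqrt{C_0/\delta}$, which merely changes the constant from $2\sqrt{C_0}$ to $1+C_0$.
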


\begin{proof}
Recall from  \eqref{eq:free-operators} that
\[
X=L^2(\Omega;\C),\qquad
A=i\Delta_\Omega,\qquad
D(A)=D(\Delta_\Omega),\qquad
B=\one_\omega.
\]
Let $C_0,T_0>0$ be the constants in
\eqref{eq:observability-growth}. Fix $\delta>0$ and let $N_\delta$ be the
operator \eqref{eq:extended-gramian} with $\lambda=\delta$ and $Y=X$.

By Proposition~\ref{prop:extended-gramian}, $N_\delta$ is self-adjoint
and nonnegative. Moreover, by \eqref{eq:gramian-quadratic},
\eqref{eq:free-operator-properties}, and \eqref{eq:unitary-properties},
we have
\[
\ip{N_\delta z}{z}
\le
\int_0^\infty e^{-\delta t}\norm{z}_X^2\,dt
=
\frac1\delta\norm{z}_X^2,\;\;z\in X.
\]
Hence
\begin{equation}\label{eq:N-upper}
\norm{N_\delta}\le\frac1\delta.
\end{equation}
Meanwhile, by Lemma~\ref{lem:gramian-coercivity}, $N_\delta$ is boundedly
invertible.

 We next estimate its inverse quantitatively. For every
$z\in X$, \eqref{eq:gramian-quadratic} and
\eqref{eq:free-operator-properties} give
\begin{equation}\label{eq:Ndelta-quadratic}
\ip{N_\delta z}{z}
=
\int_0^\infty e^{-\delta t}
\norm{BU(t)z}_X^2\,dt.
\end{equation}
Fix $T>0$. Since 
\[
e^{-\delta t}\ge e^{-\delta T},\;\;0\le t\le T,
\]
it follows from \eqref{eq:Ndelta-quadratic} that
\[
\ip{N_\delta z}{z}
\ge
e^{-\delta T}\int_0^T\norm{BU(t)z}_X^2\,dt,\;\;z\in X.
\]
Using the observability inequality
\eqref{eq:observability-inequality} in the above inequality, we obtain
\[
\ip{N_\delta z}{z}
\ge
\frac{e^{-\delta T}}{C(T)}\norm{z}_X^2,
\qquad z\in X.
\]
Since $N_\delta$ is self-adjoint, nonnegative, and boundedly invertible,
the above inequality yields
\begin{equation}\label{eq:N-inverse-upper}
\norm{N_\delta^{-1}}
\le
C(T)e^{\delta T}.
\end{equation}

Meanwhile, by Proposition~\ref{prop:gramian-feedback}, applied with
$\lambda=\delta$, and by \eqref{eq:free-operator-properties}, the feedback
\begin{equation}\label{eq:upper-feedback}
K_\delta
=
-B^*N_\delta^{-1}
=
-BN_\delta^{-1}
\end{equation}
belongs to $\Lcal(X)$. Therefore, by 
\eqref{eq:ChatS}, \eqref{eq:prefactor-condition-number},
\eqref{eq:N-upper},  \eqref{eq:N-inverse-upper}, and \eqref{eq:upper-feedback}, we have
\begin{equation}\label{eq:upper-master}
\ChatS(\delta)
\le
C_{\rm S}(\delta,K_\delta)
\le
\frac{C(T)}{\delta}e^{\delta T},
\qquad T>0.
\end{equation}
Furthermore, for $0<T\le T_0$, \eqref{eq:observability-growth} and
\eqref{eq:upper-master} give
\begin{equation}\label{eq:upper-before-optimization}
\ChatS(\delta)
\le
\frac{C_0}{\delta}
\exp\!\left(
\delta T+\frac{C_0}{T}
\right).
\end{equation}
Choose
\[
T:=\sqrt{\frac{C_0}{\delta}}.
\]
If
\[
\delta\ge \frac{C_0}{T_0^2},
\]
then $0<T\le T_0$, and hence
\eqref{eq:observability-growth} applies. For such $T$, we have
\begin{equation}\label{eq:balanced-exponent}
\delta T+\frac{C_0}{T}
=
2\sqrt{C_0\delta}.
\end{equation}
Thus, it follows from \eqref{eq:upper-before-optimization} and \eqref{eq:balanced-exponent} that
\begin{equation}\label{eq:upper-final-computation}
\ChatS(\delta)
\le
\frac{C_0}{\delta}
e^{2\sqrt{C_0\delta}}.
\end{equation}
After increasing the threshold $\delta_2$ and the constant $C_+$ if
necessary, \eqref{eq:upper-final-computation} yields
\eqref{eq:ChatS-upper}. This completes the proof.
\end{proof}

\subsection{Proofs of Theorems~\ref{thm:main},
\ref{theorem1.3-9-18}, and~\ref{thm:omega-robustness}}
\label{sec:schrodinger-upper}

We now give proofs of our main theorems.

\begin{proof}[Proof of Theorem~\ref{thm:main}]
Let $c,\delta_1>0$ be given by Proposition~\ref{prop:lower}, and let
$C_+,\delta_2>0$ be given by Proposition~\ref{prop:upper}. Set
\[
\delta_0:=\max\{\delta_1,\delta_2\}.
\]
Then \eqref{eq:ChatS-lower} and \eqref{eq:ChatS-upper} give
\[
e^{c\sqrt\delta}
\le
\ChatS(\delta)
\le
e^{C_+\sqrt\delta},
\qquad \delta\ge\delta_0,
\]
which proves \eqref{eq:main-two-sided}, with $C:=C_+$.

Taking logarithms in the above, we obtain
\[
c\sqrt\delta
\le
\log\ChatS(\delta)
\le
C_+\sqrt\delta,
\qquad \delta\ge\delta_0.
\]
Hence, for all sufficiently large $\delta$,
\[
\log c+\frac12\log\delta
\le
\log\log\ChatS(\delta)
\le
\log C_++\frac12\log\delta.
\]
Dividing by $\log\delta$ and letting $\delta\to+\infty$ gives
\eqref{eq:loglog-limit}. The preceding bounds on
$\log\ChatS(\delta)$ also give
\[
\log\ChatS(\delta)\asymp\sqrt\delta
\qquad\text{as }\delta\to+\infty.
\]
This completes the proof.
\end{proof}

\begin{proof}[Proof of Theorem~\ref{theorem1.3-9-18}]
Since $|\Omega\setminus\omega|=0$, we have $B=I$ on
$X$. For $\delta>0$, take
\begin{equation}\label{eq:full-control-feedback}
K\sb{\delta}:=-\delta I.
\end{equation}
Since $U(t)=e^{tA}$ is unitary,
\begin{equation}\label{eq:full-control-semigroup}
e^{t(A+BK\sb{\delta})}
=
e^{-\delta t}U(t),
\qquad t\ge0.
\end{equation}
Hence
\begin{equation}\label{eq:full-control-CS}
C_{\rm S}(\delta,K\sb{\delta})
=
\sup_{t\ge0}
e^{\delta t}
\norm{e^{t(A+BK\sb{\delta})}}_{\Lcal(X)}
=
1.
\end{equation}
Therefore $\ChatS(\delta;\omega)\le1$. On the other hand,
$C_{\rm S}(\delta,K)\ge1$ for every $K\in\Lcal(X)$, by taking $t=0$ in
\eqref{eq:CSdeltaK}. Thus $\ChatS(\delta;\omega)\ge1$, and
\eqref{eq:full-control-prefactor} follows. This completes the proof.
\end{proof}

\begin{proof}[Proof of Theorem~\ref{thm:omega-robustness}]
We first consider the lower bound. By $(\widehat{\rm H1})$, there exist
fixed nonempty open sets $O_0$ and $O$ such that
$$
O_0\Subset O\Subset
\Omega\setminus\overline{\omega_\eta},
\qquad \eta\in\mathcal I.
$$
Hence Lemma~\ref{lem:chain-cutoff} applies with the same sets $O_0$ and
$O$ for every $\eta\in\mathcal I$. The constants $c_0$ and $C_1$ in that
lemma can therefore be chosen independently of $\eta$. Inspecting the
proof of Proposition~\ref{prop:lower}, we see that the constants $c$ and
$\delta_1$ are determined by $c_0$ and $C_1$. Thus they can also be
chosen independently of $\eta$. Therefore,
$$
\ChatS(\delta;\omega_\eta)
\ge
e^{c\sqrt\delta},
\qquad
\delta\ge\delta_1,\quad \eta\in\mathcal I.
$$

We next consider the upper bound. By $(\widehat{\rm H2})$, there exist
constants $C_0,T_0>0$, independent of $\eta$, such that
$$
\norm{z}_X^2
\le
C_0e^{C_0/T}
\int_0^T
\norm{\one_{\omega_\eta}U(t)z}_X^2\,dt,
\qquad
z\in X,\quad 0<T\le T_0,
$$
for every $\eta\in\mathcal I$. For $T>T_0$, the estimate at time $T_0$
remains valid after enlarging the observation interval. Hence, for every
$\eta\in\mathcal I$, {\rm(H2)} holds with
$B=\one_{\omega_\eta}$, where the constants $C_0,T_0$ in
\eqref{eq:observability-growth} are independent of $\eta$.
Moreover, $B_\eta:=\one_{\omega_\eta}$ satisfies
$$
B_\eta^*=B_\eta=B_\eta^2,
\qquad
\norm{B_\eta}_{\Lcal(X)}=1,
\qquad \eta\in\mathcal I.
$$
Thus, it follows from the proof of Proposition~\ref{prop:upper} that the
constants $C_+$ and $\delta_2$ can be chosen independently of $\eta$.
Hence,
$$
\ChatS(\delta;\omega_\eta)
\le
e^{C_+\sqrt\delta},
\qquad
\delta\ge\delta_2,\quad \eta\in\mathcal I.
$$

Finally, set

$$
\delta_0:=\max\{\delta_1,\delta_2\}.
$$

Taking $C:=C_+$, the preceding lower and upper bounds give
\eqref{eq:uniform-prefactor}. This completes the proof.
\end{proof}

We conclude this subsection with a consequence of
\eqref{eq:upper-master} and Proposition~\ref{prop:lower}, which relates
the observability constants to the optimal stabilization prefactor and
yields a restriction on their small-time growth.

\begin{corollary}
\label{cor:optimal-observability-exponent}
Assume \eqref{eq:observability-inequality}. Then the following statements hold.

\begin{enumerate}[label={\rm(\roman*)}]
\item
For every $\delta>0$ and every $T>0$,
\begin{equation}\label{eq:obs-prefactor-relation}
\ChatS(\delta)
\le
\frac{C(T)}{\delta}e^{\delta T}.
\end{equation}

\item
Assume in addition {\rm(H1)}. Suppose that, for some
$C_0,T_0,\beta>0$, the observability constants $C(T)$ can be chosen so that
\[
C(T)\le
C_0\exp\!\left(\frac{C_0}{T^\beta}\right),
\qquad 0<T\le T_0.
\]
Then
\[
\beta\ge1.
\]
In particular, the observability constants cannot be chosen to satisfy
such an estimate with $0<\beta<1$.
\end{enumerate}
\end{corollary}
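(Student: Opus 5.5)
Part (i) requires no new work: it is exactly inequality \eqref{eq:upper-master} from the proof of Proposition~\ref{prop:upper}. That derivation used only \eqref{eq:observability-inequality}, never the growth bound \eqref{eq:observability-growth}. Fix $\delta>0$ and $T>0$, and take $N_\delta$ from \eqref{eq:extended-gramian}. Its norm satisfies $\norm{N_\delta}\le 1/\delta$ by \eqref{eq:N-upper}. The argument of Lemma~\ref{lem:gramian-coercivity} gives $N_\delta\ge e^{-\delta T}C(T)^{-1}I$, which yields $\norm{N_\delta^{-1}}\le C(T)e^{\delta T}$. Proposition~\ref{prop:gramian-feedback} with $K_\delta=-BN_\delta^{-1}$ then bounds $C_{\rm S}(\delta,K_\delta)$ by the condition number, and taking the infimum in \eqref{eq:ChatS} gives \eqref{eq:obs-prefactor-relation}.

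For (ii) I argue by contradiction. Suppose $0<\beta<1$ and insert the assumed bound into (i). For $0<T\le T_0$ this gives
\[
\ChatS(\delta)\le \frac{C_0}{\delta}\exp\!\Bigl(\delta T+\frac{C_0}{T^\beta}\Bigr).
\]
Balance the two exponent terms by choosing $T=\delta^{-1/(1+\beta)}$. This is admissible, i.e.\ $T\le T_0$, once $\delta$ is large. With this choice $\delta T=\delta^{\beta/(1+\beta)}$ and $C_0T^{-\beta}=C_0\delta^{\beta/(1+\beta)}$, so
\[
\ChatS(\delta)\le \frac{C_0}{\delta}e^{(1+C_0)\delta^{\beta/(1+\beta)}}.
\]
Since (H1) is assumed, Proposition~\ref{prop:lower} gives $\ChatS(\delta)\ge e^{c\sqrt\delta}$ for $\delta\ge\delta_1$. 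Combining the two bounds and taking logarithms,
\[
c\sqrt\delta\le \log C_0-\log\delta+(1+C_0)\delta^{\beta/(1+\beta)} .
\]
When $\beta<1$ we have $\beta/(1+\beta)<1/2$, so the right-hand side is $o(\sqrt\delta)$. Letting $\delta\to\infty$ therefore gives a contradiction, and hence $\beta\ge1$.

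The only point needing care is the lower-bound side: the constant $c$ in Proposition~\ref{prop:lower} is independent of the feedback, so it bounds $\ChatS$ itself. This holds, since that proposition is stated uniformly in $K$. Checking the exponent, $\beta/(1+\beta)<1/2$ is equivalent to $\beta<1$, which settles (ii).
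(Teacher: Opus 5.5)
Your proposal is correct and follows essentially the same route as the paper: part (i) is read off from the Gramian condition-number bound \eqref{eq:upper-master}, which uses only \eqref{eq:observability-inequality}, and part (ii) balances $\delta T$ against $C_0T^{-\beta}$ and compares the result with the feedback-uniform lower bound of Proposition~\ref{prop:lower}. The only differences are cosmetic: you argue by contradiction, and you take $T=\delta^{-1/(1+\beta)}$ instead of $T=(\beta C_0/\delta)^{1/(\beta+1)}$, which changes only the constants.
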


\begin{proof}
Part {\rm(i)} follows from the argument leading to
\eqref{eq:upper-master}, which uses only
\eqref{eq:observability-inequality}.

For part {\rm(ii)}, combining \eqref{eq:obs-prefactor-relation} with the
assumed bound on $C(T)$ gives
\[
\ChatS(\delta)
\le
\frac{C_0}{\delta}
\exp\!\left(
\delta T+\frac{C_0}{T^\beta}
\right),
\qquad 0<T\le T_0.
\]
For sufficiently large $\delta$, choose
\[
T=
\left(\frac{\beta C_0}{\delta}\right)^{1/(\beta+1)}.
\]
Then there exists $C_1>0$ such that
\[
\ChatS(\delta)
\le
\exp\!\left(
C_1\delta^{\beta/(\beta+1)}
\right)
\]
for all sufficiently large $\delta$.

On the other hand, {\rm(H1)} and Proposition~\ref{prop:lower} give
\[
\ChatS(\delta)\ge e^{c\sqrt\delta}
\]
for all sufficiently large $\delta$. Hence
\[
c\delta^{1/2}
\le
C_1\delta^{\beta/(\beta+1)}
\]
for all sufficiently large $\delta$. Therefore,
\[
\frac{\beta}{\beta+1}\ge\frac12,
\]
which is equivalent to $\beta\ge1$. This completes the proof.
\end{proof}

\section{Applications to Specific Geometries}
\label{sec:consequences}

In this section, we apply Theorem~\ref{thm:main} in three settings:
flat tori, the whole space $\R^n$, and bounded smooth domains.
For each setting, we verify {\rm(H1)} directly and use known quantitative
observability estimates to verify {\rm(H2)}. Theorem~\ref{thm:main} then
gives the corresponding asymptotic bounds for the optimal stabilization
prefactor.

\subsection{Flat tori}

Let $\Omega=\T^n$, and assume that the nonempty open set
$\omega\subset\T^n$ satisfies {\rm(H1)}, with $O_0$ and $O$  contained in a single flat coordinate chart.

Define the maximal avoiding-geodesic length by
\[
L_\omega
:=
\sup\left\lbrace
\ell>0:\ \text{there exists a unit-speed geodesic }
\gamma:[0,\ell]\to\T^n
\text{ such that }\gamma([0,\ell])\cap\omega=\emptyset
\right\rbrace.
\]
By \cite[Theorem~1.3]{Miller2004}, if
$L_\omega<+\infty$, then, after squaring the nonsquared observability
estimate in Miller and changing the constants, there exist $C_0,T_0>0$
such that
\begin{equation}\label{eq:torus-small-time-observability}
\norm{z}_{L^2(\T^n)}^2
\le
C_0e^{C_0/T}
\int_0^T
\norm{\one_\omega e^{it\Delta}z}_{L^2(\T^n)}^2\,dt,
\qquad
z\in L^2(\T^n),\quad 0<T\le T_0.
\end{equation}
For $T>T_0$, the estimate at time $T_0$ remains valid after enlarging
the observation interval. Hence \eqref{eq:observability-inequality} holds
for every $T>0$, and \eqref{eq:observability-growth} holds for
$0<T\le T_0$. Thus {\rm(H2)} is satisfied.

\begin{corollary}[Flat torus]\label{cor:torus}
Let $\Omega=\T^n$, and let $\omega\subset\T^n$ be a nonempty open set
satisfying {\rm(H1)}, with $O_0$ and $O$ contained in a flat coordinate
chart. If
\begin{equation}\label{eq:finite-Lomega-torus}
L_\omega<+\infty,
\end{equation}
then there exist $c,C,\delta_0>0$ such that
\begin{equation}\label{eq:torus-prefactor}
e^{c\sqrt\delta}
\le
\ChatS(\delta)
\le
e^{C\sqrt\delta},
\qquad \delta\ge\delta_0.
\end{equation}
In particular, \eqref{eq:loglog-limit} holds.
\end{corollary}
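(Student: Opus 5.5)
The corollary will be obtained by checking the two hypotheses of Theorem~\ref{thm:main} in the torus setting and then invoking that theorem directly.

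\textbf{Checking (H1).} This is part of the assumptions: $\omega$ is assumed to satisfy (H1), with $O_0\Subset O\Subset\T^n\setminus\overline\omega$ lying in a single flat coordinate chart. That chart condition is exactly what Lemma~\ref{lem:cutoff} and Lemma~\ref{lem:chain-cutoff} require. On the chart, $\Delta_\Omega$ acts as the Euclidean Laplacian, so the cutoff chain construction and Proposition~\ref{prop:lower} apply verbatim. This gives $\ChatS(\delta)\ge e^{c\sqrt\delta}$ for $\delta\ge\delta_1$.

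\textbf{Checking (H2).} Here I use \eqref{eq:finite-Lomega-torus} together with Miller's result \cite[Theorem~1.3]{Miller2004}. This yields the small-time estimate \eqref{eq:torus-small-time-observability} with cost $C_0e^{C_0/T}$ for $0<T\le T_0$. Two points need attention.

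\begin{itemize}
\item Miller's estimate is stated in nonsquared form, $\norm{z}\le \kappa e^{\kappa/T}\norm{\one_\omega e^{it\Delta}z}_{L^2((0,T)\times\T^n)}$. Squaring it gives a constant $\kappa^2e^{2\kappa/T}$, which is again of the form $C_0e^{C_0/T}$ after renaming $C_0:=\max\{\kappa^2,2\kappa\}$.
\item Observability must hold for every $T>0$, as required by \eqref{eq:observability-inequality}. For $T>T_0$, since the integrand is nonnegative,
\[
\int_0^{T_0}\norm{\one_\omega U(t)z}^2dt\le\int_0^{T}\norm{\one_\omega U(t)z}^2dt,
\]
so one may take $C(T):=C(T_0)$.
\end{itemize}

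Hence (H2) holds, and Proposition~\ref{prop:upper} gives $\ChatS(\delta)\le e^{C\sqrt\delta}$ for $\delta\ge\delta_2$.

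\textbf{Conclusion.} Combining the two bounds as in the proof of Theorem~\ref{thm:main}, with $\delta_0=\max\{\delta_1,\delta_2\}$, yields \eqref{eq:torus-prefactor}. The log-log limit \eqref{eq:loglog-limit} then follows by taking logarithms twice, exactly as before.

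\textbf{Main obstacle.} The only nonroutine step is faithfully importing Miller's theorem. I must confirm that its hypothesis (finite maximal avoiding-geodesic length on the flat torus) is exactly \eqref{eq:finite-Lomega-torus}. I must also confirm that its conclusion is the $e^{C/T}$ small-time cost, and not a weaker or stronger exponent. If Miller's rate were stated as $e^{C/T^\beta}$, Corollary~\ref{cor:optimal-observability-exponent}(ii) would force $\beta\ge1$, so the $\beta=1$ case would be the one to read off.
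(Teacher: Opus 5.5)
Your proposal is correct and follows the paper's proof: (H1) is part of the hypotheses, (H2) comes from squaring Miller's Theorem~1.3 estimate under $L_\omega<+\infty$ and extending it to $T>T_0$ by enlarging the observation interval, and then Theorem~\ref{thm:main} gives the conclusion. One small correction to your ``main obstacle'' remark: Corollary~\ref{cor:optimal-observability-exponent}(ii) only rules out exponents $\beta<1$, so it could not rescue an estimate with $\beta>1$ --- what is needed, and what Miller's theorem actually supplies, is the $e^{C/T}$ cost itself.
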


\begin{proof}
Assumption {\rm(H1)} is part of the hypotheses. By
\eqref{eq:finite-Lomega-torus} and
\cite[Theorem~1.3]{Miller2004}, the preceding argument verifies {\rm(H2)}.
The conclusion follows from Theorem~\ref{thm:main}. This completes the proof.
\end{proof}

The condition $L_\omega<+\infty$ is the geometric control condition on
the flat torus. In dimension one, it holds for every nonempty open
observation set. Hence Corollary~\ref{cor:torus} applies to every nonempty
open set $\omega\subset\T$ for which
$\T\setminus\overline\omega$ contains a nonempty open interval. Indeed,
the sets $O_0$ and $O$ in {\rm(H1)} can then be chosen inside this
interval and contained in a single flat coordinate chart.

The geometric control condition is not necessary for exact controllability
in rectangular geometries; see Jaffard~\cite{Jaffard1990} and
Komornik~\cite{Komornik1992}. For strip-shaped control regions in higher
dimensions, some geodesics avoid the control region for all time, whereas
the quantitative small-time observability estimate required here follows
from the one-dimensional estimate and Miller's product
result~\cite[Theorem~1.4]{Miller2004}. This gives the following corollary.

\begin{corollary}[Strip control on a flat torus]\label{cor:torus2}
Let $n\ge2$. There exists a nonempty open set
$\omega\subset\T^n$ satisfying {\rm(H1)} such that
\[
L_\omega=+\infty,
\]
while \eqref{eq:torus-prefactor} holds for some $c,C,\delta_0>0$.
\end{corollary}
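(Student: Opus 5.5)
Take the strip
\[
\omega:=\{x=(x_1,\dots,x_n)\in\T^n:\ x_1\in\omega_1\},\qquad \omega_1:=(0,\pi/2)\subset\T ,
\]
that is, $\omega=\omega_1\times\T^{n-1}$. The plan is to check {\rm(H1)} directly, verify $L_\omega=+\infty$ by exhibiting one avoiding geodesic, obtain {\rm(H2)} from the one-dimensional estimate combined with Miller's product theorem, and then invoke Theorem~\ref{thm:main}.

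\textbf{(H1).} The complement $\T^n\setminus\overline\omega$ contains the open set $\{x_1\in(\pi,3\pi/2)\}$. Inside it we choose two concentric balls $O_0\Subset O$ of small radius, centred at $(5\pi/4,0,\dots,0)$. These balls lie in a single flat coordinate chart, so {\rm(H1)} holds in the required form.

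\textbf{$L_\omega=+\infty$.} Consider the geodesic $\gamma(s)=(5\pi/4,\,s,0,\dots,0)$, which uses $n\ge2$. Its first coordinate is constant and lies outside $\overline{\omega_1}$, so $\gamma([0,\ell])\cap\omega=\emptyset$ for every $\ell>0$. Hence $L_\omega=+\infty$, and Corollary~\ref{cor:torus} cannot be used.

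\textbf{(H2).} This is the main step.

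\emph{One-dimensional estimate.} On $\T$ every nonempty open set satisfies $L_{\omega_1}<\infty$, since unit-speed geodesics wind around the circle. By \cite[Theorem~1.3]{Miller2004}, as used in \eqref{eq:torus-small-time-observability}, we get
\[
\norm{z}^2\le C_0e^{C_0/T}\int_0^T\norm{\one_{\omega_1}e^{it\partial_{x_1}^2}z}^2dt,\qquad 0<T\le T_0 .
\]

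\emph{Tensorization.} We apply Miller's product result \cite[Theorem~1.4]{Miller2004}. It states that if the Schr\"odinger group on $M_1$ is observable from $\omega_1$ with cost $\lesssim e^{C/T}$, then the group on $M_1\times M_2$ is observable from $\omega_1\times M_2$ with a cost of the same form.

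If one prefers a direct argument, the factor $\T^{n-1}$ requires no control. Write $e^{it\Delta}=e^{it\partial_{x_1}^2}\otimes e^{it\Delta'}$ and expand $z=\sum_{k}z_k(x_1)e^{ik\cdot x'}$ in Fourier modes in $x'$. Each mode evolves by $e^{it\partial_{x_1}^2}$ multiplied by the unimodular phase $e^{-it|k|^2}$, and $\one_\omega$ acts only in $x_1$. Therefore
\[
\norm{\one_\omega e^{it\Delta}z}^2=\sum_k\norm{\one_{\omega_1}e^{it\partial_{x_1}^2}z_k}^2 .
\]
Applying the one-dimensional estimate to each $z_k$ and summing over $k$ via Parseval gives
\[
\norm{z}^2\le C_0e^{C_0/T}\int_0^T\norm{\one_\omega e^{it\Delta}z}^2dt
\]
with the same constants.

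\emph{All $T>0$.} For $T>T_0$, the estimate at $T_0$ remains valid on the enlarged interval. Thus {\rm(H2)} holds for every $T>0$.

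\textbf{Conclusion.} With {\rm(H1)} and {\rm(H2)} both verified, Theorem~\ref{thm:main} yields \eqref{eq:torus-prefactor}.

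The only substantive point is the tensorization. I would present the Fourier-mode reduction explicitly, because it makes the constant independence from $k$ transparent: the extra phase $e^{-it|k|^2}$ has modulus one and drops out of the norm.
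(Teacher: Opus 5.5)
Your proof is correct and follows the paper's argument. Both use the same strip $\omega=I\times\T^{n-1}$, a small chart-contained set in the complementary slab for (H1), a geodesic with frozen first coordinate for $L_\omega=+\infty$, and Miller's one-dimensional estimate combined with \cite[Theorem~1.4]{Miller2004} for (H2), before applying Theorem~\ref{thm:main}. Your optional Fourier-mode tensorization in $x'$ is a valid self-contained replacement for the product-theorem citation: Parseval gives the same factor $(2\pi)^{n-1}$ on both sides, so it cancels and the constants are unchanged.
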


\begin{proof}
Choose a nonempty open interval $I\subset\T$ such that
$\T\setminus\overline I$ contains a nonempty open interval, and set
$$
\omega:=I\times\T^{n-1}.
$$
Choose nonempty open intervals $J_0,J\subset\T$ such that
$$
J_0\Subset J\Subset\T\setminus\overline I,
$$
and choose nonempty open sets $V_0,V\subset\T^{n-1}$ such that
$$
V_0\Subset V
$$
and $J\times V$ is contained in a single flat coordinate chart of $\T^n$.
Set
$$
O_0:=J_0\times V_0,
\qquad
O:=J\times V.
$$
Then
$$
O_0\Subset O\Subset\T^n\setminus\overline\omega,
$$
so {\rm(H1)} holds with $O_0$ and $O$ contained in a single falt coordinate
chart.

Since $I$ is a nonempty open interval in $\T$, the one-dimensional
maximal avoiding-geodesic length is finite. Hence
\cite[Theorem~1.3]{Miller2004} gives the required small-time
observability estimate for the one-dimensional Schr\"odinger equation
observed on $I$. By \cite[Theorem~1.4]{Miller2004}, the corresponding
observability estimate holds for the product system on
$\T\times\T^{n-1}$ observed on $I\times\T^{n-1}$. After squaring and
changing the constants, there exist $C_0,T_0>0$ such that
$$
\norm{z}_{L^2(\T^n)}^2
\le
C_0e^{C_0/T}
\int_0^T
\norm{\one_\omega e^{it\Delta}z}_{L^2(\T^n)}^2\,dt,
\qquad
z\in L^2(\T^n),\quad 0<T\le T_0.
$$
For $T>T_0$, the estimate at time $T_0$ remains valid after enlarging
the observation interval. Thus {\rm(H2)} holds. Theorem~\ref{thm:main}
therefore gives \eqref{eq:torus-prefactor}.

Finally, choose $x_1\in\T\setminus\overline I$ and a unit vector
$v'\in\R^{n-1}$. For any $x_0'\in\T^{n-1}$, the unit-speed geodesic
$$
\gamma(t):=(x_1,x_0'+tv'),
\qquad t\ge0,
$$
never meets $\omega=I\times\T^{n-1}$. Hence
$$
L_\omega=+\infty.
$$
This completes the proof.
\end{proof}

\subsection{The whole space}

We next take $\Omega=\R^n$ and
\begin{equation}\label{eq:exterior-ball-control}
\omega:=\mathbb{R}^n \setminus \overline{B_r(x_0)}
=\{x\in\R^n:|x-x_0|>r\},
\qquad r>0.
\end{equation}
Theorem~1.1 of Wang--Wang--Zhang \cite{WangWangZhang2019}, applied with the
same ball at two observation times, yields a constant $C_n>0$ such that, for
all $0\le S<R$ and all $z\in L^2(\R^n)$,
\begin{equation}\label{eq:WWZ-two-time}
\norm{z}_{L^2(\R^n)}^2
\le
C_ne^{C_nr^2/(R-S)}
\left(
\norm{e^{iS\Delta}z}_{L^2(\omega)}^2
+
\norm{e^{iR\Delta}z}_{L^2(\omega)}^2
\right).
\end{equation}

The two-time estimate \eqref{eq:WWZ-two-time} implies the interval
observability estimate required in {\rm(H2)}.

\begin{lemma}\label{lem:whole-space-observability}
Let $\omega$ be given by \eqref{eq:exterior-ball-control}. Then there exists
$C=C(n,r)>0$ such that
\begin{equation}\label{eq:whole-space-C-growth}
\norm{z}_{L^2(\R^n)}^2
\le
Ce^{C/T}
\int_0^T
\norm{\one_\omega e^{it\Delta}z}_{L^2(\R^n)}^2\,dt,
\qquad
z\in L^2(\R^n),\quad T>0.
\end{equation}
In particular, {\rm(H2)} holds.
\end{lemma}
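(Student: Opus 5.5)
The plan is to deduce the interval estimate \eqref{eq:whole-space-C-growth} from the two-time estimate \eqref{eq:WWZ-two-time} by averaging over pairs of observation times inside $[0,T]$. Fix $T>0$ and $z\in L^2(\R^n)$. For $S\in[0,T/3]$ and $R\in[2T/3,T]$ we have $R-S\ge T/3$. Since $t\mapsto e^{C_nr^2/t}$ is decreasing, \eqref{eq:WWZ-two-time} then gives, uniformly for such pairs,
\[
\norm{z}_{L^2(\R^n)}^2
\le
C_ne^{3C_nr^2/T}
\left(
\norm{e^{iS\Delta}z}_{L^2(\omega)}^2
+
\norm{e^{iR\Delta}z}_{L^2(\omega)}^2
\right).
\]

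Next, I integrate this inequality over $S\in[0,T/3]$ and $R\in[2T/3,T]$, a product region of measure $T^2/9$. Each of the two terms on the right is bounded by $(T/3)\int_0^T\norm{\one_\omega e^{it\Delta}z}^2\,dt$. The map $t\mapsto\norm{\one_\omega e^{it\Delta}z}^2$ is continuous and hence integrable, because the group is strongly continuous. This yields
\[
\frac{T^2}{9}\norm{z}^2\le C_ne^{3C_nr^2/T}\,\frac{2T}{3}\int_0^T\norm{\one_\omega e^{it\Delta}z}_{L^2(\R^n)}^2\,dt,
\]
that is,
\[
\norm{z}^2\le \frac{6C_n}{T}e^{3C_nr^2/T}\int_0^T\norm{\one_\omega e^{it\Delta}z}^2\,dt .
\]

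The only remaining point, and the only mild obstacle, is the factor $1/T$, which must be absorbed into the exponential so that the bound holds for all $T>0$ with a single constant. For $T\le1$, the elementary bound $1/T\le e^{1/T}$ gives the form $Ce^{C/T}$ with $C=\max\{6C_n,\,3C_nr^2+1\}$. For $T>1$, I apply the $T=1$ estimate and enlarge the integration interval, since the integrand is nonnegative; the constant $Ce^{C}\le Ce^{C}e^{C/T}$ then suffices after renaming $C$. Thus \eqref{eq:whole-space-C-growth} holds for every $T>0$ with $C=C(n,r)$. This gives \eqref{eq:observability-inequality} with $C(T)=Ce^{C/T}$, and \eqref{eq:observability-growth} with $C_0=C$ and any $T_0$, so (H2) holds.
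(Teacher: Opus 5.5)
Your proposal is correct and follows the paper's proof essentially step for step. You average the two-time estimate \eqref{eq:WWZ-two-time} over well-separated subintervals of $[0,T]$ (you use thirds, giving $R-S\ge T/3$; the paper uses quarters, giving $R-S\ge T/2$), absorb the resulting $1/T$ prefactor via $1/T\le e^{1/T}$ for $T\le1$, and handle $T>1$ by enlarging the observation interval from the $T=1$ estimate.
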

\begin{proof}
First let $0<T\le1$. Take
$S\in(0,T/4)$ and $R\in(3T/4,T)$. Then
\begin{equation}\label{eq:WWZ-time-gap}
R-S\ge\frac T2.
\end{equation}
By \eqref{eq:WWZ-two-time} and \eqref{eq:WWZ-time-gap}, we find
\begin{equation}\label{eq:WWZ-window-pointwise}
\norm{z}_{L^2(\R^n)}^2
\le
C_ne^{2C_nr^2/T}
\left(
\norm{\one_\omega e^{iS\Delta}z}_{L^2(\R^n)}^2
+
\norm{\one_\omega e^{iR\Delta}z}_{L^2(\R^n)}^2
\right).
\end{equation}
Integrating \eqref{eq:WWZ-window-pointwise} first in
$S\in(0,T/4)$ and then in $R\in(3T/4,T)$ gives
\begin{align}
\frac{T^2}{16}\norm{z}_{L^2(\R^n)}^2
&\le
C_ne^{2C_nr^2/T}\frac T4
\left(
\int_0^{T/4}
\norm{\one_\omega e^{it\Delta}z}_{L^2(\R^n)}^2\,dt
\right.
\notag\\
&\qquad\left.
+
\int_{3T/4}^T
\norm{\one_\omega e^{it\Delta}z}_{L^2(\R^n)}^2\,dt
\right)
\notag\\
&\le
C_ne^{2C_nr^2/T}\frac T4
\int_0^T
\norm{\one_\omega e^{it\Delta}z}_{L^2(\R^n)}^2\,dt.
\label{eq:WWZ-integrated}
\end{align}
Therefore
\begin{equation}\label{eq:whole-space-small-time}
\norm{z}_{L^2(\R^n)}^2
\le
\frac{4C_n}{T}e^{2C_nr^2/T}
\int_0^T
\norm{\one_\omega e^{it\Delta}z}_{L^2(\R^n)}^2\,dt,
\qquad 0<T\le1.
\end{equation}
Since $T^{-1}\le e^{1/T}$ for $0<T\le1$, the coefficient in
\eqref{eq:whole-space-small-time} is bounded by $Ce^{C/T}$ after changing
$C=C(n,r)$. For $T>1$, the observability inequality at time $1$ remains
valid after enlarging the observation interval. Increasing $C$ once more
gives \eqref{eq:whole-space-C-growth} for every $T>0$. This completes the proof.
\end{proof}

\begin{corollary}\label{cor:whole-space}
Let $\Omega=\R^n$, and let $\omega$ be given by
\eqref{eq:exterior-ball-control}. Then there exist
$c,C,\delta_0>0$ such that
\begin{equation}\label{eq:whole-space-prefactor}
e^{c\sqrt\delta}
\le
\ChatS(\delta)
\le
e^{C\sqrt\delta},
\qquad \delta\ge\delta_0.
\end{equation}
In particular, \eqref{eq:loglog-limit} holds. 
\end{corollary}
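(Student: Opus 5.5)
The plan is to apply Theorem~\ref{thm:main} directly. This requires checking its two hypotheses for $\Omega=\R^n$ with $\omega=\R^n\setminus\overline{B_r(x_0)}$.

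For (H2) I will invoke Lemma~\ref{lem:whole-space-observability}. It gives the observability inequality \eqref{eq:observability-inequality} for every $T>0$ with $C(T)=Ce^{C/T}$, where $C=C(n,r)$. Taking $C_0:=C$ and any $T_0>0$, say $T_0=1$, yields \eqref{eq:observability-growth}. Here $B=\one_\omega$ and $U(t)=e^{it\Delta}$ on $L^2(\R^n)$, so the inequality is precisely the form required in (H2).

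For (H1) I will use Remark~\ref{remark1.1-9-16}(i). It suffices to exhibit a nonempty open subset of $\R^n\setminus\overline\omega$. Since $\overline\omega=\{|x-x_0|\ge r\}$, we have $\R^n\setminus\overline\omega=B_r(x_0)$, which is open and nonempty because $r>0$. Concretely, I will take
\[
O_0:=B_{r/4}(x_0),\qquad O:=B_{r/2}(x_0),
\]
so that $O_0\Subset O\Subset B_r(x_0)=\R^n\setminus\overline\omega$, which is \eqref{eq:uncontrolled-open-set}. No coordinate-chart issue arises, because this is the whole-space case.

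With both hypotheses verified, Theorem~\ref{thm:main} gives constants $c,C,\delta_0>0$ such that \eqref{eq:whole-space-prefactor} holds, and \eqref{eq:loglog-limit} follows from the same theorem. I expect no real obstacle. The only point needing care is that (H2) demands the inequality for all $T>0$, and Lemma~\ref{lem:whole-space-observability} already supplies this; the substantive analytic input is the Wang--Wang--Zhang estimate, which is taken as given.
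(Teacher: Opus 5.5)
Your proposal is correct and follows the paper's proof: you obtain (H1) from $\R^n\setminus\overline\omega=B_r(x_0)$ by placing nested balls inside it, you obtain (H2) from Lemma~\ref{lem:whole-space-observability}, and you then apply Theorem~\ref{thm:main}.
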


\begin{proof}
By \eqref{eq:exterior-ball-control},
$$
\R^n\setminus\overline\omega=B_r(x_0).
$$
Hence one can choose nonempty open sets
$$
O_0\Subset O\Subset B_r(x_0),
$$
so {\rm(H1)} holds. Lemma~\ref{lem:whole-space-observability} verifies
{\rm(H2)}. The conclusion then follows from Theorem~\ref{thm:main}.
This completes the proof.
\end{proof}

\subsection{Bounded smooth domains}

Let $\Omega\subset\R^n$ be bounded and smooth, and let
$\Delta_\Omega$ be the Dirichlet Laplacian. Quantitative controllability
and observability estimates on bounded domains were studied, among others,
by Phung~\cite{Phung2001}. For the small-time estimate needed here, we use
Miller~\cite{Miller2004}.

Assume, as in \cite{Miller2004}, that generalized geodesics can be uniquely
continued at $\partial\Omega$; this holds, for instance, if
$\partial\Omega$ has no contact of infinite order with its tangent lines.
Let $L_{\Omega,\omega}$ be the supremum of the lengths of generalized
geodesic segments in $\overline\Omega$ which do not meet $\omega$.
If $
L_{\Omega,\omega}<+\infty$,
then \cite[Theorem~1.3]{Miller2004}, after squaring and changing the
constants, gives $C_0,T_0>0$ such that
\begin{equation}\label{eq:miller-domain-small-time}
\norm{z}_{L^2(\Omega)}^2
\le
C_0e^{C_0/T}
\int_0^T
\norm{\one_\omega e^{it\Delta_\Omega}z}_{L^2(\Omega)}^2\,dt,
\qquad
z\in L^2(\Omega),\quad 0<T\le T_0.
\end{equation}
For $T>T_0$, the estimate at time $T_0$ remains valid after enlarging
the observation interval. Hence {\rm(H2)} holds.

\begin{corollary}\label{cor:bounded-domain}
Let $\Omega\subset\R^n$ be a bounded smooth domain, equipped with the
Dirichlet Laplacian, and let $\omega\subset\Omega$ be a nonempty open set
satisfying {\rm(H1)}. Assume that generalized geodesics can be uniquely
continued at $\partial\Omega$ and that
\begin{equation}\label{eq:finite-L-domain}
L_{\Omega,\omega}<+\infty.
\end{equation}
Then there exist $c,C,\delta_0>0$ such that
\begin{equation}\label{eq:bounded-domain-prefactor}
e^{c\sqrt\delta}
\le
\ChatS(\delta)
\le
e^{C\sqrt\delta},
\qquad \delta\ge\delta_0.
\end{equation}
In particular, \eqref{eq:loglog-limit} holds.
\end{corollary}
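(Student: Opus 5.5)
The plan is to reduce Corollary~\ref{cor:bounded-domain} to Theorem~\ref{thm:main}. Once {\rm(H1)} and {\rm(H2)} are checked for the Dirichlet Laplacian on $\Omega$ with control region $\omega$, the two-sided bound \eqref{eq:bounded-domain-prefactor} and the limit \eqref{eq:loglog-limit} follow directly from that theorem.

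Assumption {\rm(H1)} is part of the hypotheses, so it needs no work. I would still note why it is compatible with the lower-bound mechanism in this setting. The sets satisfy $O\Subset\Omega\setminus\overline\omega$, so $\overline O$ stays away from $\partial\Omega$. Hence every $\phi\in C_c^\infty(O)$ lies in $H^2(\Omega)\cap H_0^1(\Omega)$, and on such functions the Dirichlet Laplacian acts as the Euclidean Laplacian. This is exactly what Lemma~\ref{lem:chain-cutoff} and the proof of Proposition~\ref{prop:lower} use, and they were already stated for case~(iii).

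For {\rm(H2)} I would use Miller's result \cite[Theorem~1.3]{Miller2004}. Its hypotheses are that generalized geodesics continue uniquely at $\partial\Omega$ and that $L_{\Omega,\omega}<+\infty$, as in \eqref{eq:finite-L-domain}. Under these, Miller gives a non-squared small-time estimate of the form $\|z\|\le C e^{C/T}\|\one_\omega e^{it\Delta_\Omega}z\|_{L^2((0,T)\times\Omega)}$ for $0<T\le T_0$. Squaring it and replacing $C$ by a larger $C_0$ gives \eqref{eq:miller-domain-small-time}, which is \eqref{eq:observability-growth}. For $T>T_0$, the integral over $(0,T)$ dominates the integral over $(0,T_0)$, so \eqref{eq:observability-inequality} holds with $C(T):=C_0e^{C_0/T_0}$. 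This gives {\rm(H2)} for all $T>0$.

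The main point requiring care is the translation from Miller's statement to the present one. Miller's constants and time normalization may differ, for example $e^{it\Delta}$ versus $e^{-it\Delta}$, or an observation operator that is smoothed rather than the sharp cutoff $\one_\omega$. I would handle the sign by time reversal and complex conjugation, which preserve $\|\one_\omega\cdot\|$. Any rescaling of time only changes the constant $C_0$. Once this is settled, invoking Theorem~\ref{thm:main} completes the proof.
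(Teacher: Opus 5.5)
Your proposal is correct and follows the paper's own proof. In both, (H1) is assumed, (H2) is obtained by squaring Miller's small-time estimate \cite[Theorem~1.3]{Miller2004} and extending it to $T>T_0$ by enlarging the observation interval, and Theorem~\ref{thm:main} then gives the conclusion. Your extra remarks, that $C_c^\infty(O)\subset H^2(\Omega)\cap H_0^1(\Omega)$ and that sign conventions can be handled by conjugation or time reversal, are sound details the paper leaves implicit.
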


\begin{proof}
By \cite[Theorem~1.3]{Miller2004} and
\eqref{eq:finite-L-domain}, the observability assumptions
\eqref{eq:observability-inequality}--\eqref{eq:observability-growth} hold.
The conclusion follows from Theorem~\ref{thm:main}. This completes the proof.
\end{proof}

Condition \eqref{eq:finite-L-domain} is the corresponding geometric control
condition for generalized geodesics. Thus Corollary~\ref{cor:bounded-domain}
applies in particular to the standard GCC configurations for bounded smooth
domains.

\section*{Acknowledgments}

S. H. was supported by the National Natural Science Foundation of China under Grants 12171178 and 12171442, and by the Guangdong Basic and Applied Basic Research Foundation under Grant 2026B1515020075.
G. W. was partially supported by the New Cornerstone Science Foundation and the National Natural Science Foundation of China under Grant 12371450.


\begin{thebibliography}{99}



\bibitem{BhandariCapistranoFilhoMajumdarTanaka2024}
K.~Bhandari, R.~de A.~Capistrano-Filho, S.~Majumdar, and T.~Y.~Tanaka,
\newblock Coupled linear Schr\"odinger equations: control and stabilization results,
\newblock \emph{Z. Angew. Math. Phys.} \textbf{75} (2024),
Paper No.~97.

\bibitem{DusserRabah2000}
X.~Dusser and R.~Rabah,
\newblock On exponential stabilizability with arbitrary decay rate for linear
systems in Hilbert spaces,
\newblock \emph{Systems Anal. Model. Simul.} \textbf{37} (2000), no.~4,
417--433.

\bibitem{EngelNagel2000}
K.-J.~Engel and R.~Nagel,
\newblock \emph{One-Parameter Semigroups for Linear Evolution Equations},
\newblock Graduate Texts in Mathematics, vol.~194, Springer-Verlag,
New York, 2000.

\bibitem{FragnelliMoumniSalhi2024}
G.~Fragnelli, A.~Moumni, and J.~Salhi,
\newblock Controllability and stabilization of a degenerate/singular
Schr\"odinger equation,
\newblock \emph{J. Math. Anal. Appl.} \textbf{537} (2024), no.~2,
Paper No.~128290.

\bibitem{Jaffard1990}
S.~Jaffard,
\newblock Contr\^ole interne exact des vibrations d'une plaque rectangulaire,
\newblock \emph{Portugal. Math.} \textbf{47} (1990), no.~4,
423--429.

\bibitem{Komornik1992}
V.~Komornik,
\newblock On the exact internal controllability of a Petrowsky system,
\newblock \emph{J. Math. Pures Appl. (9)} \textbf{71} (1992), no.~4,
331--342.

\bibitem{MaWangYu2023}
Y.~Ma, G.~Wang, and H.~Yu,
\newblock Feedback law to stabilize linear infinite-dimensional systems,
\newblock \emph{Math. Control Relat. Fields} \textbf{13} (2023), no.~3,
1160--1183.

\bibitem{Miller2004}
L.~Miller,
\newblock How violent are fast controls for Schr\"odinger and plate vibrations?,
\newblock \emph{Arch. Ration. Mech. Anal.} \textbf{172} (2004), no.~3,
429--456.

\bibitem{Nguyen2024}
H.-M.~Nguyen,
\newblock Rapid stabilization and finite time stabilization of the bilinear
Schr\"odinger equation,
\newblock to appear in \emph{Ann. Henri Lebesgue},
arXiv:2405.10002, 2024.

\bibitem{Phung2001}
K.-D.~Phung,
\newblock Observability and control of Schr\"odinger equations,
\newblock \emph{SIAM J. Control Optim.} \textbf{40} (2001), no.~1,
211--230.

\bibitem{QuanWang2026}
C.~Quan and G.~Wang,
\newblock Endpoint asymptotics of optimal stabilization prefactors,
\newblock arXiv:2609.05863, 2026.

\bibitem{Treves2022}
F.~Treves,
\newblock \emph{Analytic Partial Differential Equations},
\newblock Grundlehren der mathematischen Wissenschaften, vol.~359,
Springer, Cham, 2022.

\bibitem{Vest2013}
A.~Vest,
\newblock Rapid stabilization in a semigroup framework,
\newblock \emph{SIAM J. Control Optim.} \textbf{51} (2013), no.~5,
4169--4188.

\bibitem{WangWangZhang2019}
G.~Wang, M.~Wang, and Y.~Zhang,
\newblock Observability and unique continuation inequalities for the
Schr\"odinger equation,
\newblock \emph{J. Eur. Math. Soc.} \textbf{21} (2019), no.~11,
3513--3572.

\bibitem{WangXu2017}
G.~Wang and Y.~Xu,
\newblock \emph{Periodic Feedback Stabilization for Linear Periodic Evolution Equations},
\newblock SpringerBriefs in Mathematics,
Springer, Cham, 2017.



\end{thebibliography}
\end{document}